\documentclass[11pt,a4paper]{article}
\usepackage[colorlinks=true,citecolor=blue,linkcolor=magenta]{hyperref}
\usepackage{amssymb}
\usepackage{amsmath}
\usepackage{amsthm}
\usepackage{amsfonts}
\usepackage{enumerate}
\usepackage{graphicx}
\usepackage{colortbl}
\usepackage{tikz}
\usepackage{ifthen}
\usepackage{float}
\usepackage{caption}
\usepackage{graphicx}
\usepackage{subcaption}
\usepackage{booktabs}
\usepackage[a4paper]{geometry}
\usepackage[linesnumbered, boxruled]{algorithm2e}
\usepackage{authblk}
\usepackage{relsize}

\usepackage[numbers,square,comma]{natbib} 

\newtheorem{theorem}{Theorem}
\newtheorem{definition}[theorem]{Definition}
\newtheorem{lemma}[theorem]{Lemma}
\newtheorem{corollary}[theorem]{Corollary}

\newtheorem{observation}[theorem]{Observation}

\SetKwInOut{Input}{Input}\SetKwInOut{Output}{Output}

\makeatletter
\renewcommand{\algocf@caption@boxruled}{%
  \hrule
  \hbox to \hsize{%
    \vrule\hskip-0.4pt
    \vbox{
       \vskip\interspacetitleboxruled%
       \unhbox\algocf@capbox\hfill
       \vskip\interspacetitleboxruled
       }%
     \hskip-0.4pt\vrule%
   }\nointerlineskip%
}%
\makeatother

\def\Nset{\mathbb{N}}

\newcommand{\plc}{P}
\newcommand{\placement}[1][]{(\xmin#1, \ymin#1, \xmax#1, \ymax#1)}
\newcommand{\singleRel}{\alpha}

\newcommand{\xmin}{x_{\min}}
\newcommand{\ymin}{y_{\min}}
\newcommand{\xmax}{x_{\max}}
\newcommand{\ymax}{y_{\max}}

\newcommand{\nat}{\mathbb{N}}
\newcommand{\real}{\mathbb{R}}

\newcommand{\bigO}{\mathcal{O}}

\newcommand{\stress}[1]{{\emph{#1}}}
\newcommand{\wrap}[1]{#1}

\newcommand{\smallClb}{6.828}
\newcommand{\bigCub}{11.091}

\definecolor{orange}{rgb}{1,0.9,0}
\definecolor{violet}{rgb}{0.8,0,1}
\definecolor{darkgreen}{rgb}{0,0.5,0}
\definecolor{grey}{rgb}{0.75,0.75,0.75}

\newcounter{arraylengthcounter}
\def\getNoOfElements#1#2{%
  \setcounter{arraylengthcounter}{0}%
  \foreach\arraylengthcounter in {#1}{\stepcounter{arraylengthcounter}}%
  \edef#2{\arabic{arraylengthcounter}}
}

\newcommand{\labelname}[1]{}

\newcommand{\drawPermutationImpl}[4]{
 \begin{pgfonlayer}{fg}    
 \getNoOfElements{#1}\Numx
 \getNoOfElements{#2}\Numy
 \foreach[count=\i] \val in {#1}
 {
  \draw[grey,densely dotted] (\i,0) -- (\i,{\Numy+1});
  \node[] at (\i,-1) {\strut \small $#3{\val}$};
 }
 \foreach[count=\i] \val in {#2}
 {
  \draw[grey,densely dotted] (0,\i) -- ({\Numx+1},\i);
  \node at (-1.5,\i) {\strut \small $#4{\val}$};
 }
 \foreach[count=\i] \val in {#1}
 {
  \foreach[count=\j] \valj in {#2}
  {
   \ifthenelse{\equal{\val}{\valj}}
   {
    \node[thick,circle,draw,scale=0.5] (\val) at (\i,\j) {}; 
   }
   {}
  }
 }
 \node (min) at (0,0) {\labelname{min}};
 \node (max) at ({\Numx+1},{\Numy+1}) {\labelname{max}};
 \end{pgfonlayer}
}

\colorlet{forbiddencol}{gray!20}
\colorlet{forbiddencoldark}{forbiddencol!94!black}
\newcommand{\forbiddencolor}{forbiddencol}

\newcommand{\drawPermutationTextRaw}[1]{#1}
\newcommand{\drawPermutationTextAddPi}[1]{\pi(#1)}
\newcommand{\drawPermutationTextAddRho}[1]{\rho(#1)}

\newcommand{\drawPermutation}[2]{
   \drawPermutationImpl{#1}{#2}{\drawPermutationTextRaw}{\drawPermutationTextAddPi}
}

\pgfdeclarelayer{bg}    
\pgfdeclarelayer{fg}    
\pgfsetlayers{bg,main,fg}  

\begin{document}

\author{Jannik Silvanus}
\author{Jens Vygen}
\affil{Research Institute for Discrete Mathematics, University of Bonn
}
\renewcommand\Authand{\qquad\qquad}
\title{Few Sequence Pairs Suffice: \\ { \smaller Representing All Rectangle Placements} }


\date{August 31, 2017}

\maketitle

\begin{abstract}
We consider representations of general non-overlapping placements of rectangles by spatial relations (west, south, east, north) of pairs of rectangles.
We call a set of representations \emph{complete} if it contains a representation of every placement of $n$ rectangles.

We prove a new upper bound of
$\mathcal{O}(\frac{n!}{n^6} \cdot (\frac{11+5 \sqrt 5}{2})^n)$
and a new lower bound of
$\Omega(\frac{n!}{n^4} \cdot (4 + 2 \sqrt2)^n)$
on the minimum cardinality of complete sets of representations.
A key concept in the proofs of these results are pattern-avoiding permutations.

The new upper bound directly improves upon the well-known sequence pair representation, which has size $(n!)^2$, by only considering a \emph{restricted} set of sequence pairs.
It implies theoretically faster algorithms for VLSI placement problems.
\end{abstract}

\section{Introduction}
\label{intro}

Axis-aligned non-overlapping placements of rectangles
can be characterized by the set of spatial relations (also called \stress{ABLR-relations} \cite{zhang2004theory} and \stress{HV-relations} \cite{murata1997mapping}) that hold for pairs of rectangles (west, south, east, north).
A \stress{representation} of such a placement consists of a satisfied spatial relation for every pair.
See Figure~\ref{intro::figure::placements}.

We call a set $R$ of representations \textit{complete} (for $n \in \nat$) if for every placement $\plc{}$ of any $n$ rectangles, $R$ contains a representation of $\plc{}$.
Naturally, one is interested in smallest complete sets of representations.
There is a trivial complete set of representations of size $4^{\binom{n}{2}}$.
The famous sequence-pair representation, first suggested by \cite{jerrum1985complementary} and later rediscovered by \cite{murata1996vlsi},
maps pairs of permutations to representations and achieves a size of ${(n!)}^2$.

Many other strategies have been proposed to obtain a complete set of representations, often only for placements with additional properties.
An overview is given in \cite{ChenChangFloorplanChapter}.
The previously best known upper bound was $\bigO(\frac{n!}{n^{4.5}}32^n)$, see Section~\ref{intro::related_work}.

In this paper, we establish the first nontrivial lower bound of $\Omega(\frac{n! c^n}{n^4} )$ and a new upper bound of $\bigO(\frac{n! C^n}{n^6} )$
on the minimum cardinality of complete sets of representations, where $c=4 + 2 \sqrt2 \geq \smallClb{}$ and $C=\frac{11+5 \sqrt 5}{2} \leq \bigCub{}$.
Both new bounds are proven by considering certain pattern-avoiding permutations.

\begin{figure}
\providecommand{\subfigwidth}{0.3\textwidth}

 \begin{subfigure}[b]{\subfigwidth}
  \centering
  \resizebox{.8\textwidth}{!}{%
  \begin{tikzpicture}
   \draw (1,1) [fill=gray!20]   rectangle (2.5,3) node[pos=.5] {$1$};
   \draw (2.5,1) [fill=gray!20] rectangle (4,2)   node[pos=.5] {$2$};
   \draw (1,3) [fill=gray!20]   rectangle (2.5,4) node[pos=.5] {$3$};
   \draw (2.5,2) [fill=gray!20] rectangle (4,4)   node[pos=.5] {$4$};
  \end{tikzpicture}
  }%
  \subcaption{A placement admitting two representations.}
 \label{intro::figure::placements::placement1}
 \end{subfigure}
\hfill
\begin{subfigure}[b]{\subfigwidth}
\centering
\begin{tabular}{lrr}
 Pair & $r$ & $r^{\prime}$ \\ \toprule
 $(1,2)$ & west  & west  \\
 $(1,3)$ & south & south \\
 $(1,4)$ & west  & west  \\
 $(2,3)$ & \stress{south} & \stress{east}  \\
 $(2,4)$ & south & south \\
 $(3,4)$ & west  & west  \\
\end{tabular}

\subcaption{Two representations of the placement on the left.}
 \label{intro::figure::placements::table}
\end{subfigure}
\hfill
 \begin{subfigure}[b]{\subfigwidth}
  \centering
  \resizebox{.8\textwidth}{!}{%
  \begin{tikzpicture}
   \draw (1,1) [fill=gray!20] rectangle (2,3) node[pos=.5] {$1$};
   \draw (2,1) [fill=gray!20] rectangle (4,2) node[pos=.5] {$2$};
   \draw (1,3) [fill=gray!20] rectangle (3,4) node[pos=.5] {$3$};
   \draw (3,2) [fill=gray!20] rectangle (4,4) node[pos=.5] {$4$};
  \end{tikzpicture}
  }%
  \subcaption{A placement represented by $r$, but not by $r^{\prime}$.}
 \label{intro::figure::placements::placement2}
 \end{subfigure}
 \caption{Representations of placements.}
 \label{intro::figure::placements}
\end{figure}
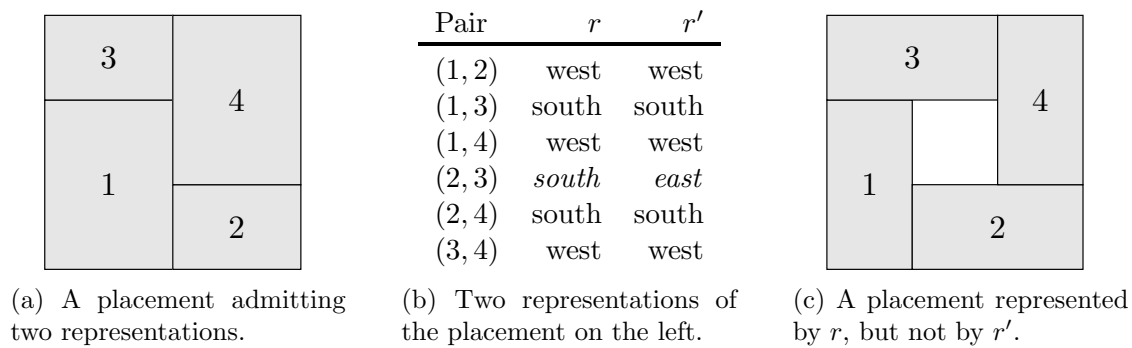


Given some rectangles, the set of placements satisfying a fixed representation forms a polyhedron with one inequality per rectangle pair.
Hence, it can be efficiently optimized over.
The theoretically fastest algorithms for VLSI placement problems such as half-perimeter wirelength optimization
 work by enumerating a (small) complete set of representations and computing, for each representation, an optimal represented placement.
Hence, constructing a smaller complete set of representations directly implies faster algorithms for these problems.
In practice, fast exact algorithms use a branch-and-bound method that directly enumerates the possible spatial relations (\cite{onodera1991branch}, \cite{funke2016exact}).
Moreover, many heuristics based on local search optimizing over a complete set of representations have been proposed (e.g., \cite{murata1996vlsi}, \cite{young2003twin}).
Both exact algorithms and heuristics could also benefit from a more compact complete set of representations.


\subsection{Related work}
\label{intro::related_work}

A closely related concept uses a \stress{floorplan} to represent the relative positions of rectangles.
A floorplan is a dissection of a rectangle by horizontal and vertical line segments into $m$ smaller rectangles, called \stress{rooms}, some of which may be marked as \stress{empty}.
Then, $n \leq m$ rectangles can be assigned bijectively to the nonempty rooms.
A floorplan without empty rooms is called \stress{mosaic} floorplan.
A mosaic floorplan that can be obtained by recursively splitting a room vertically or horizontally into two rooms is called \stress{slicing} floorplan.
The placement depicted in Figure~\ref{intro::figure::placements::placement1} corresponds to a slicing floorplan.
Contrary, the placement in Figure~\ref{intro::figure::placements::placement2} corresponds to a general, non-mosaic floorplan.
It can be turned into a non-slicing mosaic floorplan by filling the empty central room with a rectangle.

The structure of a floorplan can be captured by \emph{segment-room relations}: A segment $s$ and a room $r$ have the segment-room relation south if and only if $s$ contains the bottom edge of $r$, etc.
Then, we consider two floorplans as equivalent if there is a labeling of their rooms and segments which results in the same segment-room relations and which preserves empty rooms.
Note that some authors consider an assignment of the rectangles to the nonempty rooms to be part of a floorplan.
In \cite{murata1997mapping} (Property 5), it is shown that for each pair of rooms in a floorplan equivalence class,
one can deduce a spatial relation that is satisfied by each floorplan in this equivalence class.
This is proven by showing for each pair of rooms the existence of a sequence of segment-room relations that implies a spatial relation for the pair.
In the remainder of this section, when we speak of the number of certain floorplans, we mean the number of equivalence classes.

Using a bijection (\cite{ackerman2006bijection}) between Baxter permutations and mosaic floorplans,
their number is known to be $\Theta(\frac{8^n}{n^4})$ (first shown in \cite{yao2003floorplan}).
The same map, restricted to separable permutations, is a bijection to slicing floorplans, showing that the number of slicing floorplans is $\Theta(\frac{(3+\sqrt{8})^n}{n^{1.5}})$ (also first shown in \cite{yao2003floorplan}).

General floorplans may contain an arbitrary number of empty rooms.
Young et al.~\cite{young2003twin} call an empty room \stress{reducible} if it can be merged with adjacent rooms while keeping the spatial relations of the remaining nonempty rooms implied by the floorplan.
For example, the empty room in the floorplan corresponding to the placement in Figure~\ref{intro::figure::placements::placement2} is not reducible.
On the contrary, all rooms in the floorplan corresponding to the placement in
Figure~\ref{intro::figure::placements::placement1} would be reducible if empty.
The best upper bound of $\bigO(\frac{32^n}{n^{4.5}})$ on
the number of general floorplans with $n$ rectangles (occupied rooms)
and without reducible empty rooms was shown in \cite{shen2003bounds},
and no stronger lower bound than the number of mosaic floorplans is known.

Property 1 and Theorem 3 in \cite{murata1997mapping} imply that for each placement of $n$ rectangles,
there exists a floorplan equivalence class with $n$ nonempty rooms and an assignment of the rectangles into the nonempty rooms
such that each pair of rectangles satisfies the spatial relation implied by their rooms in the floorplan equivalence class.
Hence, an upper bound $U(n)$ on the number of general floorplans with $n$ occupied rooms and without reducible empty rooms implies an upper bound of $U(n) \cdot n!$ on the minimum size of a complete set of representations for $n$ rectangles.
However, it is unknown whether lower bounds can be transferred in the same way.

\subsection{Definitions}

\newcommand{\west}{\text{west}}
\newcommand{\east}{\text{east}}
\newcommand{\south}{\text{south}}
\newcommand{\north}{\text{north}}

Let $n\in\Nset$.
We denote by $[n]$ the set of integers $\{1, \ldots, n\}$.

A \stress{placement} is a tuple of functions $\plc{} = \placement$ from $[n]$ to $\mathbb{R}$ with, for $i \in [n]$,
\begin{enumerate}[~~~~~(i)]
 \item $\xmin(i) < \xmax(i)$, and
 \item $\ymin(i) < \ymax(i)$.
\end{enumerate}

We often call the elements of $[n]$ \stress{rectangles},
and refer to $n$ as the \stress{size} of $\plc{}$.
A placement is called \stress{feasible} if for all $1\le i<j\le n$ at least one of the following holds:
\begin{center}
\begin{tabular}{r@{}c@{}lll}
$\xmax(i)$ & $\ \le \ $ & $\xmin(j)$  && ($i$ is \stress{west} of $j$) \\
$\ymax(i)$ & $\ \le \ $ & $\ymin(j)$  && ($i$ is \stress{south} of $j$) \\
$\xmax(j)$ & $\ \le \ $ & $\xmin(i)$  && ($i$ is \stress{east} of $j$) \\
$\ymax(j)$ & $\ \le \ $ & $\ymin(i)$  && ($i$ is \stress{north} of $j$)
\end{tabular}
\end{center}

A function $r: [n]^2\setminus\{(i,i):i\in[n]\} \to \{\west,\south,\east,\north\}$
is a \stress{representation} of (or \stress{represents}) a feasible placement
if the following statements hold for all $i,j\in [n]^2$ with $i\not=j$:
\begin{center}
\begin{tabular}{r@{}c@{}lcll}
$r(i,j)$&$\ = \ $&$\west$ & $\Rightarrow$ & $i$ is west of $j$ \\
$r(i,j)$&$\ = \ $&$\south$ & $\Rightarrow$ & $i$ is south of $j$ \\
$r(i,j)$&$\ = \ $&$\east$ & $\Rightarrow$ & $i$ is east of $j$ \\
$r(i,j)$&$\ = \ $&$\north$ & $\Rightarrow$ & $i$ is north of $j$
\end{tabular}
\end{center}

A \stress{complete set of representations} for $n$ is a set $R$ such that for all feasible placements $\plc{}$
of size $n$ there exists an element of $R$ that represents $\plc{}$.

How small can a complete set of representations be?
Obviously it needs to have cardinality at least $n!$ because for placements in which all
rectangles have identical $y$-coordinates, we must represent all $n!$ horizontal orders.
A trivial upper bound is $4^{\binom{n}{2}}$ because for each unordered pair there are four possibilities.
In this paper we prove a new lower bound and a new upper bound.

\section{Preliminaries on permutations}

Both for the lower and the upper bound, we will restrict to pairs of permutations with certain properties.
With any permutation $\pi$ on $[n]$ we associate a strict total order $<_{\pi}$ by defining $i  <_{\pi} j \iff \wrap{\pi}(i) < \wrap{\pi}(j)$ for $i,j\in [n]$.

\subsection{Plane permutations}

The definitions below follow \cite{bousquet2007forest}.
\begin{definition}
 We call a permutation $\pi$ on $[n]$ \stress{plane} if it avoids the pattern $21\bar 3 54$,
 i.e., if for all indices $i < j < l < m \in [n]$ with $j <_{\pi} i <_{\pi} m <_{\pi} l$, there exists
 an index $k$ with $j < k < l$ with $i <_{\pi} k <_{\pi} m$.
\end{definition}

Figure~\ref{definition:plane_perm:figure} gives an illustration of the forbidden pattern for plane permutations.

\begin{figure}
\centering
\providecommand{\subfigwidth}{0.47\textwidth}
 \begin{subfigure}[t]{\subfigwidth}
  \centering
  \resizebox{0.95\textwidth}{!}{%
   \begin{tikzpicture}[scale=0.8]
      \drawPermutation{i,j,l,m}{j,i,m,l}
       \fill [forbiddencol] (j |- i) rectangle (l |- m);
   \end{tikzpicture}
   }
   \subcaption{The forbidden configuration for plane permutations: If there are elements as shown and no elements in the gray area, then $\pi$ is not plane.}
   \label{definition:plane_perm:figure}
 \end{subfigure}
\hfill
 \begin{subfigure}[t]{\subfigwidth}
  \centering
  \resizebox{0.95\textwidth}{!}{%
   \begin{tikzpicture}[scale=.55]
      \drawPermutation{1,2,3,4,5,6,7,8}{5,1,6,8,2,4,3,7}
      \draw [thick,->] (1) -- (2);
      \draw [thick,->] (1) -- (6);
      \draw [thick,->] (2) -- (3);
      \draw [thick,->] (2) -- (4);
      \draw [thick,->] (3) -- (7);
      \draw [thick,->] (4) -- (7);
      \draw [thick,->] (5) -- (6);
      \draw [thick,->] (6) -- (7);
      \draw [thick,->] (6) -- (8);
   \end{tikzpicture}
   }
   \subcaption{The natural embedding of $G_{\pi}$ for the plane permutation $\pi = (2,5,7,6,1,3,8,4)$.
   }
   \label{definition:natural_embedding:figure}
 \end{subfigure}
 \caption{Illustrations on plane permutations. The elements are ordered on the x- and y-axis according to their relative order in $<$ and $<_\pi$, respectively.}
\end{figure}
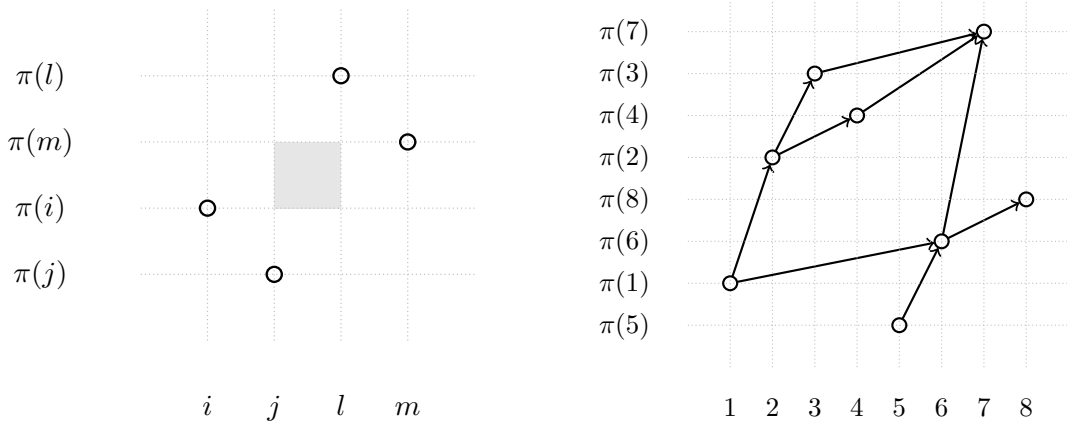

\begin{definition}
\label{perm:graph}
 Given a permutation $\pi$ on $[n]$, we define an acyclic directed graph \stress{$G_{\pi}$} with vertex set $[n]$, whose
 arc set $A(G_{\pi})$ consists of exactly the pairs $(i, j)$ with
 \begin{enumerate}[(i)]
  \item $i < j \text{ and } i <_{\pi} j$, and \label{perm:graph:first}
  \item there is no $k$ with $i < k < j$ and $i <_{\pi} k <_{\pi} j$. \label{perm:graph:second}
 \end{enumerate}
\end{definition}

\begin{observation} \label{perm:graph:reachability:observation}
  Let $\pi$ be a permutation on $[n]$ and $1 \leq i,j \leq n$.
  Then $j$ is reachable from $i$ in $G_\pi$ iff $i \le j$ and $\pi(i) \le \pi(j)$.
\end{observation}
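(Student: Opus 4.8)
The statement asserts that reachability in $G_\pi$ coincides with the componentwise (dominance) order on $[n]$ induced by the identity order and by $\pi$. My plan is to prove the two implications separately: the forward one directly, and the backward one by induction.

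For the forward direction, I would argue that every arc increases both coordinates. By condition~\ref{perm:graph:first}, any arc $(a,b)\in A(G_\pi)$ satisfies $a<b$ and $\pi(a)<\pi(b)$. Hence, if $j$ is reachable from $i$ along a directed path $i=v_0,v_1,\dots,v_m=j$, then chaining these inequalities along the path gives $i=v_0\le v_m=j$ and $\pi(i)\le\pi(j)$, where the length-$0$ path (the case $i=j$) accounts for the possibility of equality. This establishes that reachability implies $i\le j$ and $\pi(i)\le\pi(j)$.

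For the backward direction, suppose $i\le j$ and $\pi(i)\le\pi(j)$. If $i=j$ the empty path witnesses reachability, so assume $i\ne j$; since $\pi$ is injective, this upgrades the two weak inequalities to the strict $i<j$ and $\pi(i)<\pi(j)$, i.e.\ condition~\ref{perm:graph:first} holds for the pair $(i,j)$. I would then induct on $j-i$. If there is no index $k$ with $i<k<j$ and $\pi(i)<\pi(k)<\pi(j)$, then condition~\ref{perm:graph:second} also holds, so $(i,j)\in A(G_\pi)$ is a direct arc. Otherwise, fix such a $k$; then $i<k<j$ forces $k-i<j-i$ and $j-k<j-i$, and both pairs $(i,k)$ and $(k,j)$ satisfy the hypothesis of the claim (using $\pi(i)<\pi(k)<\pi(j)$). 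By the induction hypothesis, $k$ is reachable from $i$ and $j$ is reachable from $k$; concatenating the two paths shows $j$ is reachable from $i$.

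Conceptually, conditions~\ref{perm:graph:first} and~\ref{perm:graph:second} together say exactly that the arcs of $G_\pi$ are the covering relations of the poset $([n],\preceq)$ with $a\preceq b\iff a\le b\text{ and }\pi(a)\le\pi(b)$, and the Observation is then the standard fact that reachability equals the reflexive--transitive closure of the covers. The only points requiring care, and the closest thing to an obstacle, are the translation between the nonstrict dominance order and the strict inequalities in the arc definition (which relies on injectivity of $\pi$ to rule out ties in a single coordinate) and the verification that the chosen intermediate element $k$ strictly decreases the induction measure $j-i$; both are handled above.
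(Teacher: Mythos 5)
Your proof is correct. The paper states this as an Observation without any proof, so there is nothing to compare against; your argument (arcs strictly increase both coordinates, giving the forward direction, and induction on $j-i$ using an intermediate element $k$ to split the pair whenever $(i,j)$ fails condition (ii), giving the backward direction) is the standard argument the authors evidently had in mind, and your handling of the $i=j$ case and of the strict-versus-weak inequalities via injectivity of $\pi$ is exactly right.
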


To explain the name ``plane'', one can define a \stress{natural embedding} of $G_{\pi}$ by drawing $i$ in $(i, \pi(i)) \in \real^2$ and
 drawing all arcs as straight lines (cf.\ Figure~\ref{definition:natural_embedding:figure}).
One can show that $\pi$ is plane iff the natural embedding of $G_{\pi}$ is planar, but we will not need this.

We will use the following recent result.

\begin{theorem}[\cite{2017arXiv170204529B}] \label{thm_numberofplaneperm}
The number of plane permutations on $[n]$ is $\Theta \bigl( \frac{C^n}{n^6} \bigr)$, where $C = \frac{11+5 \sqrt 5}{2}$.
\end{theorem}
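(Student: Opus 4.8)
The plan is to treat this as an exact-enumeration problem and attack it with the symbolic method: produce a generating function for plane permutations, solve the resulting functional equation, and then read off the asymptotics by singularity analysis. A useful guiding observation is that $C = \frac{11+5\sqrt5}{2}$ is exactly $\varphi^5$, the fifth power of the golden ratio $\varphi = \frac{1+\sqrt5}{2}$ (indeed $\varphi^5 = 5\varphi + 3$). So the dominant singularity of the counting series should sit at $\rho = \varphi^{-5}$, and the appearance of $\sqrt5$ signals that the underlying recursion is governed by a characteristic equation with golden-ratio roots. This is the target I would aim the whole analysis at.

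To obtain the generating function I would build a generating tree for plane permutations. Inserting the new largest value (or, dually, a new rightmost position) into a plane permutation on $[n-1]$ produces a plane permutation on $[n]$ exactly when the insertion site keeps the natural embedding of $G_\pi$ planar; by Observation~\ref{perm:graph:reachability:observation} the admissible sites are governed by the reachability (componentwise-$\le$) structure, so the number of ``active sites'' can be tracked by a single catalytic variable $u$. This gives a functional equation of the shape $F(z,u) = (\text{rational in } z,u) + z\cdot(\text{a transform of } F(z,u) \text{ and } F(z,1))$, that is, a linear equation with one catalytic variable. I would solve it by the (obstinate) kernel method: cancel the kernel by substituting its algebraic root $u=u(z)$ and, exploiting the symmetry of the kernel, combine the images of $u$ under the kernel's group to eliminate the unknown boundary series and extract $F(z,1)$ in closed form (algebraic, or at worst D-finite).

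With $F(z,1) = \sum_n a_n z^n$ in hand, the final step is singularity analysis in the style of Flajolet and Odlyzko. I would verify that $\rho = \varphi^{-5}$ is the unique dominant singularity and compute the local expansion there. The exponent $n^{-6}$ is the delicate point: a plain square-root singularity would produce a half-integer exponent, so the integer $-6$ forces a singular part of logarithmic type, namely $(1-z/\rho)^5 \log\frac{1}{1-z/\rho}$ up to lower-order terms, whose coefficients behave like $\Theta(\rho^{-n} n^{-6})$ by the transfer theorem. Proving the matching upper and lower bounds of the $\Theta$ then reduces to pinning down the nonzero constant in front of this logarithmic term and ruling out competing singularities on $|z| = \rho$.

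The main obstacle I anticipate is twofold. First, deriving and then solving the catalytic functional equation: getting the succession rule exactly right, so that planarity is faithfully encoded by the active-site statistic, and carrying out the kernel elimination is where the real work lies, and it is here that the golden-ratio structure must emerge. Second, the passage from an algebraic or D-finite series to the precise polynomial correction $n^{-6}$ is nonstandard because the exponent is an integer; I expect most of the effort to go into confirming that the local behaviour is genuinely of the logarithmic form above rather than a pure power, and that its leading constant does not vanish.
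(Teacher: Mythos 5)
First, a point of orientation: the paper you are working from does not prove Theorem~\ref{thm_numberofplaneperm} at all; the result is imported wholesale from \cite{2017arXiv170204529B} (Bouvel, Guerrini, Rechnitzer, Rinaldi), so the only sensible comparison is with the proof given there. At the level of architecture your plan does match that proof --- generating tree, functional equation, kernel method, then asymptotics --- and your observation that $C=\varphi^5$ is correct and genuinely relevant: $C$ is the larger root of $\lambda^2=11\lambda+1$, which is the characteristic equation of the P-recurrence the real proof eventually derives.

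The genuine gap is your claim that the active sites ``can be tracked by a single catalytic variable $u$'', and it is fatal in a way that your own third paragraph exposes. You correctly note that a subexponential factor $n^{-6}$ with integer exponent cannot come from a pure power-type singularity and forces a logarithmic singular term; but singular expansions of algebraic functions are Puiseux series and contain no logarithms, so coefficient asymptotics of algebraic series never have negative-integer polynomial exponents (Flajolet--Sedgewick, Theorem VII.8). Meanwhile, by the theorem of Bousquet-M\'elou and Jehanne, any polynomial --- in particular linear --- functional equation with a single catalytic variable of exactly the shape you write down (relating $F(z,u)$ and $F(z,1)$) has an algebraic solution. So if your steps 1--2 succeeded as described, the generating function would be algebraic and the theorem you are trying to prove would be false: the plan is internally inconsistent, and no bookkeeping with one label can faithfully encode this class. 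The proof in \cite{2017arXiv170204529B} indeed requires succession-rule labels that are \emph{pairs} $(h,k)$, hence a functional equation with \emph{two} catalytic variables, solved by the obstinate kernel method with a group acting on both; the outcome is a D-finite, provably non-algebraic series, and the asymptotics are then extracted not by abstract singularity analysis but from explicit binomial-sum formulas and the resulting two-term P-recurrence, whose characteristic roots are $(11\pm 5\sqrt5)/2$. Beyond this structural error, every step that would constitute actual proof content --- deriving the succession rule, setting up and solving the equation, locating the dominant singularity at $\varphi^{-5}$, and checking that the leading constant does not vanish --- is deferred in your write-up, so even under a charitable reading this is a research programme rather than a proof.
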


\subsection{Biplane permutations}

\begin{definition}
 Let $\pi$ be a permutation on $[n]$. The permutation $-\pi$ is defined by
 \begin{align*}
  -\pi(i) := n + 1 - \pi(i)
 \end{align*}
 for $i \in [n]$.
\end{definition}

\begin{observation}
\label{lowerbound:minuspi:reachability:observation}
 Let $\pi$ be a permutation on $[n]$, and let $1 \leq i < j \leq n$.
 Then $j$ is reachable from $i$ in $G_{\pi}$ iff $j$ is not reachable from $i$ in $G_{-\pi}$.
\end{observation}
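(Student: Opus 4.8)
The plan is to prove Observation~\ref{lowerbound:minuspi:reachability:observation} by reducing it entirely to the reachability characterization already available in Observation~\ref{perm:graph:reachability:observation}. That earlier observation tells us that for any permutation $\sigma$ on $[n]$, a vertex $j$ is reachable from $i$ in $G_\sigma$ if and only if $i \le j$ and $\sigma(i) \le \sigma(j)$. Since the present statement concerns both $G_\pi$ and $G_{-\pi}$, the strategy is to apply this characterization twice, once to $\pi$ and once to $-\pi$, and then compare the two resulting conditions. The hypothesis fixes $i < j$, so the index condition $i \le j$ is automatically satisfied in both applications, and the whole question collapses to comparing the two value-order conditions.

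Concretely, first I would apply Observation~\ref{perm:graph:reachability:observation} with $\sigma = \pi$: under the standing assumption $i < j$, the vertex $j$ is reachable from $i$ in $G_\pi$ if and only if $\pi(i) \le \pi(j)$. Next I would apply the same observation with $\sigma = -\pi$, which is again a permutation on $[n]$, to obtain that $j$ is reachable from $i$ in $G_{-\pi}$ if and only if $(-\pi)(i) \le (-\pi)(j)$. The final step is to unfold the definition of $-\pi$: substituting $(-\pi)(k) = n+1-\pi(k)$ into the inequality gives
\begin{align*}
 n+1-\pi(i) \le n+1-\pi(j) \iff \pi(j) \le \pi(i).
\end{align*}
Thus reachability in $G_{-\pi}$ is equivalent to $\pi(j) \le \pi(i)$.

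It then remains to check that the two value conditions are genuinely complementary. Because $\pi$ is a permutation and $i \ne j$, we have $\pi(i) \ne \pi(j)$, so exactly one of $\pi(i) < \pi(j)$ and $\pi(j) < \pi(i)$ holds. Hence the (non-strict) condition $\pi(i) \le \pi(j)$ characterizing reachability in $G_\pi$ and the condition $\pi(j) \le \pi(i)$ characterizing reachability in $G_{-\pi}$ cannot both hold and cannot both fail: each is exactly the negation of the other. This yields that $j$ is reachable from $i$ in $G_\pi$ if and only if $j$ is not reachable from $i$ in $G_{-\pi}$, as claimed.

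I expect no serious obstacle here; the only point demanding care is the role of the injectivity of $\pi$. The reachability characterizations themselves are stated with non-strict inequalities, so without invoking $\pi(i) \ne \pi(j)$ one could not conclude that the two events partition all cases. Since the hypothesis $i < j$ forces $i \ne j$ and hence $\pi(i) \ne \pi(j)$, this is immediate, but it is the hinge of the argument and should be stated explicitly rather than glossed over.
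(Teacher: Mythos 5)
Your proof is correct and follows the route the paper clearly intends: the paper states Observation~\ref{lowerbound:minuspi:reachability:observation} without proof, immediately after the characterization in Observation~\ref{perm:graph:reachability:observation}, and your argument is exactly the instantiation of that characterization for $\pi$ and $-\pi$ together with the complementarity of $\pi(i)<\pi(j)$ and $\pi(j)<\pi(i)$ for $i\neq j$. Your explicit remark that injectivity of $\pi$ is the hinge making the non-strict conditions genuinely complementary is a correct and worthwhile point of care.
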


For example, for the permutation $\pi$ in Figure \ref{definition:natural_embedding:figure} one can see that $-\pi$ is not plane.

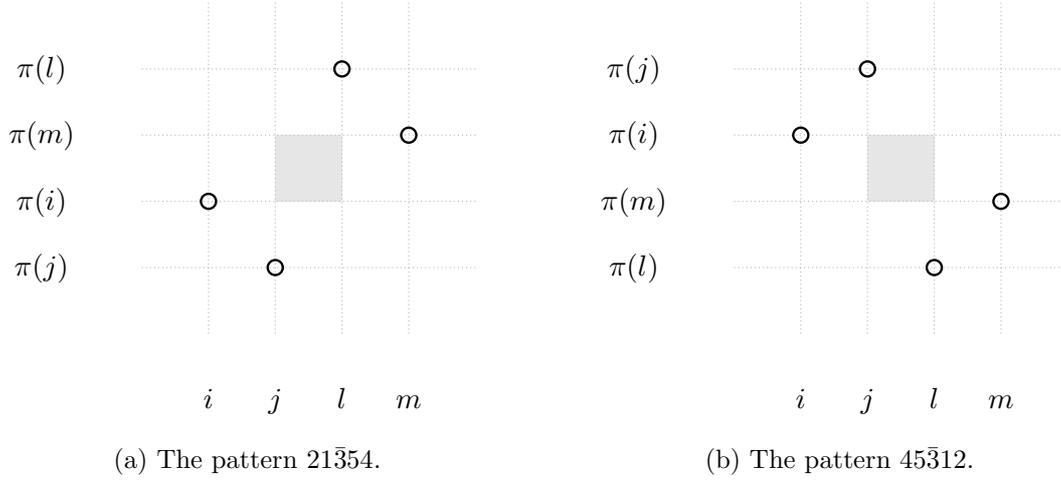
\begin{figure}[ht]
\centering
\providecommand{\subfigwidth}{0.47\textwidth}
\begin{subfigure}[b]{\subfigwidth}
  \centering
  \resizebox{0.95\textwidth}{!}{%
   \begin{tikzpicture}[scale=0.8]
      \drawPermutation{i,j,l,m}{j,i,m,l}
      \fill [\forbiddencolor] (j |- i) rectangle (l |- m);
   \end{tikzpicture}
   }
   \caption{The pattern $21\bar{3}54$.}
\end{subfigure}
\hfill 
\begin{subfigure}[b]{\subfigwidth}
  \centering
  \resizebox{0.95\textwidth}{!}{%
   \begin{tikzpicture}[scale=0.8]
      \drawPermutation{i,j,l,m}{l,m,i,j}
       \fill [\forbiddencolor] (j |- i) rectangle (l |- m);
   \end{tikzpicture}
   }
   \caption{The pattern $45\bar{3}12$.}
\end{subfigure}
\caption{Forbidden patterns of biplane permutations. Gray areas are assumed to be empty.}
\label{lowerbound:biplane_perm:figure}
\end{figure}

\begin{definition}
 Let $\pi$ be a permutation on $[n]$. We call $\pi$ \stress{biplane} if $\pi$ avoids the patterns $21\bar{3}54$ and $45\bar{3}12$.
\end{definition}

The patterns forbidden in biplane permutations are illustrated in Figure~\ref{lowerbound:biplane_perm:figure}.

\begin{observation}
 Let $\pi$ be a permutation. Then, the following statements are equivalent:
 \begin{enumerate}[(i)]
  \item $\pi$ is biplane
  \item $-\pi$ is biplane
  \item $\pi$ and $-\pi$ are plane
 \end{enumerate}
\end{observation}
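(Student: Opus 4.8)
The plan is to exploit the fact that $\pi \mapsto -\pi$ is exactly the \emph{complement} of $\pi$: it preserves all positions but reverses the relative order of the values (geometrically, it reflects the natural plot about a horizontal axis, as in Figure~\ref{lowerbound:biplane_perm:figure}). First I would record the single structural fact that drives everything: the complement map carries the forbidden pattern $21\bar 3 54$ to $45\bar 3 12$ and back. Indeed, the underlying unbarred patterns $2143$ and $3412$ are complements of one another, and complementing the full length-$5$ pattern sends the value ranks $2,1,3,5,4$ to $4,5,3,1,2$; since the barred middle entry has rank $3$, it is fixed. Thus a quadruple of positions $i<j<l<m$ is an \emph{unextendable} occurrence of $21\bar 3 54$ in $\pi$ (one with no index $k$, $j<k<l$, satisfying $i<_\pi k<_\pi m$) if and only if the same positions form an unextendable occurrence of $45\bar 3 12$ in $-\pi$. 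This yields the key equivalence: $\pi$ avoids $21\bar 3 54$ iff $-\pi$ avoids $45\bar 3 12$, and, applying the same statement to $-\pi$, $\pi$ avoids $45\bar 3 12$ iff $-\pi$ avoids $21\bar 3 54$.

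With this lemma in hand, the three equivalences reduce to a short Boolean chain. Writing $P(\sigma)$ for ``$\sigma$ avoids $21\bar 3 54$'' (i.e.\ $\sigma$ is plane) and $Q(\sigma)$ for ``$\sigma$ avoids $45\bar 3 12$'', the lemma reads $P(\pi)\Leftrightarrow Q(-\pi)$ and $Q(\pi)\Leftrightarrow P(-\pi)$. By definition $\pi$ is biplane iff $P(\pi)\wedge Q(\pi)$. Statement (iii), that $\pi$ and $-\pi$ are both plane, is $P(\pi)\wedge P(-\pi)$, which by $P(-\pi)\Leftrightarrow Q(\pi)$ equals $P(\pi)\wedge Q(\pi)$, i.e.\ (i). Statement (ii), that $-\pi$ is biplane, is $P(-\pi)\wedge Q(-\pi)$, which by the two equivalences equals $Q(\pi)\wedge P(\pi)$, again (i). Hence (i), (ii), (iii) are equivalent. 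Here I use $-(-\pi)=\pi$, which is immediate from the definition.

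The main obstacle is the careful verification of the key lemma, specifically that not only the two base patterns swap but the entire barred structure does: for the occurrence at positions $i<j<l<m$, the set of candidate extension indices must be identical for both patterns. The position constraint $j<k<l$ is untouched, since complementation fixes positions; and the value constraint $\pi(i)<\pi(k)<\pi(m)$ required for extending $21\bar 3 54$ becomes, after substituting $-\pi=n+1-\pi$, the constraint $m<_{-\pi}k<_{-\pi}i$ required for extending $45\bar 3 12$ — which is literally the same condition $\pi(i)<\pi(k)<\pi(m)$ rewritten. Thus the nonexistence of an extension transfers verbatim, and no genuine case analysis is needed beyond this bookkeeping. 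Alternatively, one could phrase the whole argument geometrically as the vertical reflection of Figure~\ref{lowerbound:biplane_perm:figure}, under which the two forbidden configurations (the four points together with the empty gray region) are exchanged.
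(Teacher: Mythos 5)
Your proof is correct. The paper states this result as an observation and supplies no proof of its own; your argument---that the complement map $\pi \mapsto -\pi$ carries unextendable occurrences of $21\bar{3}54$ to unextendable occurrences of $45\bar{3}12$ at the same positions (and back, using $-(-\pi)=\pi$), so that all three statements reduce to ``$\pi$ avoids both patterns''---is exactly the intended verification, matching the mirror symmetry of the two forbidden configurations in Figure~\ref{lowerbound:biplane_perm:figure}.
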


The number of biplane permutations is also well-understood:

\begin{theorem}[\cite{DBLP:journals/combinatorics/AsinowskiBBMP13}] \label{thm_numberofbiplaneperm}
The number of biplane permutations on $[n]$ is $\Theta(\frac{c^n}{n^4})$, where $c = 4 + 2 \sqrt2$.
\end{theorem}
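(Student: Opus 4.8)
The plan is to reduce the enumeration to a tractable generating function and then extract the $\Theta(c^n/n^4)$ behaviour by singularity analysis. The structural backbone is the observation that $\pi$ is biplane iff both $\pi$ and $-\pi$ are plane, so a biplane permutation carries \emph{two} acyclic digraphs $G_\pi$ and $G_{-\pi}$ on the common vertex set $[n]$ whose reachability relations are complementary (Observation~\ref{lowerbound:minuspi:reachability:observation}) and both of which admit planar natural embeddings. First I would turn this into a recursive decomposition by scanning the permutation from left to right (equivalently, inserting the values $1,2,\dots,n$ in increasing order while tracking their positions) and recording at each step only a bounded amount of boundary data, namely the \emph{active sites} at which the next element may be placed without creating either forbidden pattern. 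Note that the two unbarred patterns are $2143$ (from $21\bar{3}54$) and its complement $3412$ (from $45\bar{3}12$), so the involution $\pi\mapsto -\pi$ swaps the two constraints and simply exchanges $G_\pi$ with $G_{-\pi}$; this symmetry lets me treat both patterns uniformly. Because each barred condition forbids an occurrence of $2143$ or $3412$ only when a prescribed intermediate element is \emph{absent}, I expect the admissible insertions to be captured by a finite succession rule with one or two integer labels.

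From such a succession rule I would write a functional equation for the bivariate generating function $F(x,u)=\sum_{\pi} x^{|\pi|}u^{\ell(\pi)}$, where $x$ marks the size and the catalytic variable $u$ marks the label counting active sites. The equation should be linear in $F$ with a kernel that is a low-degree polynomial in $u$, so the kernel method — substituting the small root $u=U(x)$ of the kernel to cancel $F$ — yields an equation for the univariate series $F(x,1)=\sum_n b_n x^n$ whose singular structure can be determined. I would expect the dominant singularity to lie at $\rho = 1/c$ with $c = 4+2\sqrt{2}$, which pins down the exponential growth rate $c^n$ at once.

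The delicate step — and the one I expect to be the main obstacle — is obtaining the precise subexponential factor $n^{-4}$. This is not the generic $n^{-3/2}$ of an algebraic generating function, and its occurrence (exactly as for the closely related Baxter numbers, which grow like $8^n/n^4$) strongly suggests that $F(x,1)$ is D-finite rather than algebraic, so the behaviour near $\rho$ must be pinned down by identifying the exact singularity type. Concretely I would aim for an explicit closed form for $b_n$, a hypergeometric-type sum of products of binomial coefficients analogous to the Baxter formula, obtained either by solving the functional equation outright or by composing the decomposition with a bijection to a known planar family — plane bipolar orientations, or pairs of compatible plane trees, which are the natural home of the two complementary planar digraphs $G_\pi$ and $G_{-\pi}$ above. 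Given such a formula, Stirling's approximation applied to the sum, which concentrates near its central term, yields the $c^n/n^4$ asymptotics; the $\Theta$ then follows in both directions, the upper bound from a transfer theorem applied to the singular expansion (or from directly bounding the explicit sum) and the matching lower bound from the positivity of the leading asymptotic coefficient, which the kernel-method solution or the closed form makes explicit.
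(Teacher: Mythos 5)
First, note that the paper itself does not prove Theorem~\ref{thm_numberofbiplaneperm}: the statement is imported, with its citation, from Asinowski, Barequet, Bousquet-M\'elou, Mansour and Pinter \cite{DBLP:journals/combinatorics/AsinowskiBBMP13}, and nothing in the present paper depends on how it is proved. So the only meaningful comparison is with that reference, and in outline your plan does track its actual method: the barred patterns $21\bar{3}54$ and $45\bar{3}12$ correspond to the vincular patterns $2\text{-}14\text{-}3$ and $3\text{-}41\text{-}2$ (which is why your ``forbidden only when the intermediate element is absent'' reading is the right one), the class is grown by an insertion procedure encoded in a succession rule with integer labels, this is translated into a functional equation with catalytic variables, and the equation is attacked with the (obstinate) kernel method before asymptotics are extracted; the complement symmetry $\pi \mapsto -\pi$ exchanging the two patterns is also genuinely exploited there.

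However, as a proof your proposal has a real gap: every step that carries the content of the theorem is deferred rather than executed. You never exhibit the succession rule (and for vincular patterns the existence of a tractable rule is not automatic --- one must verify that the label data really determines the admissible insertions), never write down the functional equation, and never compute the kernel or its roots; the two facts that constitute the theorem enter only as expectations, namely the location of the dominant singularity at $1/(4+2\sqrt2)$ (``I would expect\dots'') and the subexponential factor $n^{-4}$, which you correctly flag as the main obstacle but then outsource to an unspecified closed form or bijection ``analogous to the Baxter formula.'' For this class that is exactly where the difficulty is concentrated: the generating function is D-finite but not algebraic, so no generic transfer theorem applies until the singular structure is actually determined, and in \cite{DBLP:journals/combinatorics/AsinowskiBBMP13} this requires solving the equation explicitly. (Two smaller slips: scanning positions left to right and inserting the values $1,\dots,n$ in increasing order are different generating-tree constructions, not equivalent ones; and the boundary data is not ``bounded'' --- the labels are unbounded integers, only the number of labels per node is fixed.) In short, the proposal is a plausible reconstruction of the strategy of the cited paper, but it assumes rather than proves both the growth constant and the polynomial exponent; it is a research plan, not a proof.
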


\section{Upper bound}

In this section we show a better upper bound on the size of complete sets of representations.
We first review the sequence pair representation of Jerrum \cite{jerrum1985complementary}
(rediscovered by \cite{murata1996vlsi}), because our proof will build on it.


\subsection{Sequence pairs}

Given a pair $(\pi,\rho)$ of permutations on $[n]$ (called a \stress{sequence pair}),
we define $r_{\pi,\rho}$ by
\begin{center}
\begin{tabular}{r@{}c@{}lcll}
$r_{\pi,\rho}(i,j)$&$\ = \ $&$\west$ & if & $i  <_{\pi} j$ and $j <_{\rho} i$ \\
$r_{\pi,\rho}(i,j)$&$\ = \ $&$\south$ & if & $i  <_{\pi} j$ and $i <_{\rho} j$ \\
$r_{\pi,\rho}(i,j)$&$\ = \ $&$\east$ & if & $j  <_{\pi} i$ and $i <_{\rho} j$ \\
$r_{\pi,\rho}(i,j)$&$\ = \ $&$\north$ & if & $j  <_{\pi} i$ and $j <_{\rho} i$
\end{tabular}
\end{center}

\begin{theorem}[\cite{jerrum1985complementary}] \label{thm_jerrum}
The set of functions $r_{\pi,\rho}$ for all sequence pairs $(\pi,\rho)$ is a complete set of representations.
\end{theorem}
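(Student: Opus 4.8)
The goal is to show that every feasible placement $\plc$ of size $n$ is represented by $r_{\pi,\rho}$ for some sequence pair $(\pi,\rho)$. The plan is to reduce this, first, to the existence of two total orders with a prescribed geometric meaning, and then to an acyclicity statement about a constraint relation of $\plc$.

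First I would isolate the two properties that drive the reduction. It suffices to construct permutations $\pi,\rho$ on $[n]$ such that, for all $i\neq j$,
\begin{align*}
i <_\pi j &\ \Longrightarrow\ \text{$i$ is west of $j$ or $i$ is south of $j$}, \\
i <_\rho j &\ \Longrightarrow\ \text{$i$ is south of $j$ or $i$ is east of $j$}.
\end{align*}
Granting these, I would verify that $r_{\pi,\rho}$ represents $\plc$ by a short case check on its four possible values; by the symmetry $i\leftrightarrow j$ it is enough to treat the values $\west$ and $\south$. If $r_{\pi,\rho}(i,j)=\south$, then $i<_\pi j$ and $i<_\rho j$, and were $i$ \emph{not} south of $j$, the first implication would force $i$ west of $j$ and the second $i$ east of $j$ — impossible since $\xmin(i)<\xmax(i)$. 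If $r_{\pi,\rho}(i,j)=\west$, then $i<_\pi j$ and $j<_\rho i$; applying the first implication to $(i,j)$ and the second to $(j,i)$, a non-west $i$ would have to be both south and north of $j$, again impossible. The cases $\east,\north$ follow by swapping $i$ and $j$.

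Next I would reformulate the first implication: a total order $<_\pi$ satisfies it precisely when, as a set of ordered pairs, it is contained in the relation $A:=\{(i,j): i\text{ west of }j\text{ or }i\text{ south of }j\}$. Since $\plc$ is feasible, for every pair at least one orientation lies in $A$; this orientation is \emph{forced} exactly for pairs separated along a single axis (one is strictly west, or strictly south, of the other while their projections onto the other axis overlap), and is \emph{free} for diagonal pairs separated along both axes. Hence a suitable $\pi$ exists if and only if the forced sub-relation $F_\pi$ — the horizontal/vertical constraint relation of the placement — is acyclic, in which case any linear extension of $F_\pi$ works; free pairs may be oriented arbitrarily because both of their orientations lie in $A$. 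The order $\rho$ is handled symmetrically: reflecting the $x$-axis turns ``south or east'' into $A$ and preserves feasibility, so the same statement applies to the reflected placement and yields a constraint relation $F_\rho$.

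The crux, and the step I expect to be the main obstacle, is proving that $F_\pi$ (and hence $F_\rho$) is acyclic. The difficulty is that a forced ``west'' edge constrains only the $x$-coordinate and a forced ``south'' edge only the $y$-coordinate, so no one-line monotone potential settles it. I would argue by a minimal-cycle/sweepline analysis: on a hypothetical cycle, consider the rectangle $r$ with lowest top edge (minimal $\ymax$); its incoming forced edge cannot be a ``south'' edge, since that would exhibit a predecessor with an even lower top edge, so it must be a ``west'' edge. Pushing this around a minimal cycle shows the cycle is forced to ``wind around'', which ultimately makes two of its rectangles overlap — contradicting feasibility. This overlap phenomenon is exactly where the geometric non-overlap hypothesis is used, and making it fully rigorous (rather than a one-step potential argument) is the part requiring the most care.
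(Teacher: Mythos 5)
Your reduction and your verification step are sound, and they in fact reconstruct the paper's own framework rather than a different one: once ``forced'' is defined correctly, your relation $F_\pi$ is exactly the arc set $(S\setminus E)\cup(W\setminus N)$ of the paper's digraph $G_1$ (and $F_\rho$ is the arc set of $G_2$), your claim that any linear extension of $F_\pi$ lies inside $A=W\cup S$ is the statement that topological orders of $G_1$ suffice, and your four-way case check is Lemma~\ref{sequencepairconsistent}. One slip on the way: your geometric characterization of forced versus free pairs is wrong. If $i$ is both west \emph{and} south of $j$, the pair is separated along both axes, yet it is forced, since only the orientation $(i,j)$ lies in $A$; a linear extension that orients it as $(j,i)$ violates your first implication. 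The correct dichotomy is ``exactly one orientation of the pair lies in $A$'' versus ``both do'' (only the northwest/southeast diagonal pairs are free); as literally stated, your $F_\pi$ is too small and the step ``free pairs may be oriented arbitrarily'' fails for the pairs you misclassified. This is fixable, but it must be fixed, because the relation whose acyclicity you need is the larger one.

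The genuine gap is the acyclicity claim itself, which you correctly identify as the crux and then do not prove. Your sketch establishes one true fact: on a hypothetical cycle, the rectangle with minimal $\ymax$ cannot have an incoming south arc, so its incoming arc is a west arc. But ``pushing this around the cycle makes it wind around and forces an overlap'' is not an argument: the sketch names no decreasing quantity, no invariant that ``winds'', and no pair of rectangles that must overlap; moreover the minimality observation does not propagate, since the rectangle with the second-lowest top edge may legitimately receive a south arc from the lowest one. The paper closes exactly this hole (Lemma~\ref{g1acyclic}) with a different idea: by transitivity of $N$ and of $E$, two \emph{consecutive} arcs of different types (one in $S\setminus E$, one in $W\setminus N$, in either order) always compose to a single arc of $G_1$. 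Consequently all arcs of a shortest path have the same type, so reachability of $b$ from $a$ gives $(a,b)\in S\cup W$, hence $(b,a)\in N\cup E$, hence $(b,a)$ is not an arc --- so no cycle can exist. Without this composition lemma, or a substitute of equal strength, your proposal does not yet prove Theorem~\ref{thm_jerrum}.
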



Let us first give a new short proof of this famous result because this will be the basis for our improved upper bound.

Let $P=\placement$ be a feasible placement; this will be fixed for most of this section.
Given $P$, we first define four strict partial order
relations $N,S,E,W\subseteq [n]^2$ (north, south, east, west) by
\begin{eqnarray*}
N &:=& \left\{(a,b)\in[n]^2 \mid \ymax(b) \le \ymin(a) \right\}, \\
S &:=& \left\{(a,b)\in[n]^2 \mid \ymax(a) \le \ymin(b) \right\}, \\
E &:=& \left\{(a,b)\in[n]^2 \mid \xmax(b) \le \xmin(a) \right\}, \\
W &:=& \left\{(a,b)\in[n]^2 \mid \xmax(a) \le \xmin(b) \right\}.
\end{eqnarray*}
We have $N\cap S=\emptyset$,
$E\cap W=\emptyset$,
$(a,b)\in N\cup S\cup E\cup W$ for all $a\not=b$,
$(a,b)\in N$ iff $(b,a)\in S$, and
$(a,b)\in E$ iff $(b,a)\in W$.

Define two digraphs $G_1$ and $G_2$, both with vertex set $[n]$, and with arc sets:
\begin{eqnarray*}
A(G_1) &:=& (S\setminus E)\cup (W\setminus N) \\
A(G_2) &:=& (S\setminus W)\cup (E\setminus N)
\end{eqnarray*}

\begin{lemma}
\label{g1acyclic}
$G_1$ and $G_2$ are acyclic.
\end{lemma}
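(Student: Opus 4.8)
The plan is to prove the claim for $G_1$ and get $G_2$ for free. Reflecting the placement through $x\mapsto -x$ swaps west with east and fixes north and south, hence maps $S\setminus E$ and $W\setminus N$ to $S\setminus W$ and $E\setminus N$; thus $G_2$ for $P$ is exactly $G_1$ for the reflected placement, and it suffices to show $G_1$ is acyclic. So I would assume, for contradiction, that $G_1$ contains a directed cycle. Every arc $(a,b)$ is of one of two types: a \emph{south-arc} $(a,b)\in S\setminus E$ carries the \emph{strong} inequality $\ymax(a)\le\ymin(b)$ together with the \emph{weak} inequality $\xmin(a)<\xmax(b)$ (from $(a,b)\notin E$), while a \emph{west-arc} $(a,b)\in W\setminus N$ carries $\xmax(a)\le\xmin(b)$ together with $\ymin(a)<\ymax(b)$. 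I would first record why no single linear potential such as $\xmin+\ymin$ can succeed: since rectangles have different sizes, the strong coordinate gained along one arc can be overwhelmed by the weak coordinate, so the argument must be combinatorial rather than a one-shot topological order.

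The first step is that a monochromatic cycle is impossible: along a south-arc $\ymin$ strictly increases, because $\ymin(a)<\ymax(a)\le\ymin(b)$, so an all-south cycle is absurd, and symmetrically an all-west cycle forces $\xmin$ to strictly increase. For a mixed cycle I would chain the vertical extents around the cycle. Reading a south-arc as ``top of tail $\le$ bottom of head'' and a west-arc as ``bottom of tail $<$ top of head'', and bridging at each vertex with $\ymin(v)<\ymax(v)$, these inequalities telescope to a strict $c<c$, a contradiction, \emph{provided} the chain never has to step down from a top edge to a bottom edge. Tracking arrival and departure edges vertex by vertex shows this down-step is forced exactly when two west-arcs are consecutive. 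By the reflected argument, the horizontal chain closes unless two south-arcs are consecutive.

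Hence the only surviving case is a cycle whose cyclic type-sequence contains both a block $\mathrm{SS}$ and a block $\mathrm{WW}$. Here I would pass to a \emph{shortest} directed cycle $v_0\to\cdots\to v_{k-1}\to v_0$, which has no shortcut arc $v_i\to v_{i+2}$ (such an arc would skip $v_{i+1}$ and shorten the cycle). For such a cycle, two consecutive south-arcs $v_i\to v_{i+1}\to v_{i+2}$ chain to $\ymax(v_i)\le\ymin(v_{i+2})$, so $(v_i,v_{i+2})\in S$; since this pair is not an arc, it cannot lie in $S\setminus E$, forcing $(v_i,v_{i+2})\in E$, i.e.\ $v_i$ is \emph{south and east} of $v_{i+2}$. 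Symmetrically, two consecutive west-arcs force the skipped endpoints into the \emph{north and west} relation. The plan is then to propagate these forced diagonal relations along the south- and west-blocks and play them against the weak not-east and not-north inequalities at the block boundaries, exactly as in the smallest instance $\mathrm{SSWW}$, where the two inner vertices of the blocks can be separated neither horizontally nor vertically without violating a weak inequality.

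The main obstacle is precisely this last case. The naive potential is genuinely unavailable, and the monotone chaining only disposes of cycles that never repeat a direction; everything hard is concentrated in cycles that alternate long south- and west-blocks. Turning the local deductions $\mathrm{SS}\Rightarrow$ (south and east) and $\mathrm{WW}\Rightarrow$ (north and west) into a single global contradiction for an arbitrary such alternation is the delicate bookkeeping step, and I expect essentially all of the work of the lemma to lie there.
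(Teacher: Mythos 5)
Your reduction of $G_2$ to $G_1$ via the reflection $x\mapsto -x$, your chaining argument disposing of any cycle whose cyclic type sequence has no two consecutive west-arcs (or, symmetrically, no two consecutive south-arcs), and your observation that in a chordless shortest cycle two consecutive south-arcs force the skipped pair into $S\cap E$ (and two consecutive west-arcs into $N\cap W$) are all correct. But the proposal stops exactly at the point you yourself flag: cycles containing both a south-south and a west-west block are never ruled out, and the sketched plan (``propagate the forced diagonal relations and play them against the weak inequalities'') is not routine bookkeeping. Indeed, your two local deductions are mutually \emph{consistent} rather than contradictory: in the four-cycle $v_0\to v_1\to v_2\to v_3\to v_0$ with types south, south, west, west, they assert $(v_0,v_2)\in S\cap E$ and $(v_2,v_0)\in N\cap W$, which are the \emph{same} statement; getting a contradiction requires analyzing the inner vertices $v_1,v_3$ as well, and nothing in your outline explains how this extends to arbitrary block structures. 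So this is a genuine gap, not a deferred calculation.

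The missing idea is a composition lemma for \emph{mixed} consecutive arcs, which is the heart of the paper's proof and which your argument never touches: if $(a,b)\in S\setminus E$ and $(b,c)\in W\setminus N$, then $(a,c)\in A(G_1)$ (and symmetrically for a west-arc followed by a south-arc). Its proof uses transitivity of $E$ and $N$ --- you only ever use transitivity of $S$ and $W$ --- together with feasibility: from $(c,b)\in E$ and $(a,b)\notin E$, transitivity of $E$ gives $(a,c)\notin E$; from $(b,a)\in N$ and $(b,c)\notin N$, transitivity of $N$ gives $(a,c)\notin N$; since every ordered pair of distinct rectangles lies in $N\cup S\cup E\cup W$, this forces $(a,c)\in S\cup W$, hence $(a,c)\in (S\setminus E)\cup(W\setminus N)=A(G_1)$. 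This lemma closes your proof immediately: a surviving cycle contains arcs of both types, hence contains a mixed transition; the composition lemma turns its skipped pair into an arc, i.e., a chord of your shortest cycle, a contradiction; monochromatic cycles you have already excluded. (The paper deploys the same lemma on shortest paths rather than shortest cycles: every shortest path is monochromatic, so reachability implies $S\cup W$, so the reverse pair lies in $N\cup E$ and cannot be an arc, which is acyclicity.)
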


\begin{proof}
By symmetry, it suffices to consider $G_1$ (for $G_2$ exchange $W$ and $E$).

Let $a,b,c\in[n]$ such that $(a,b)\in S\setminus E$ and $(b,c)\in W\setminus N$.
As $E$ is transitive, $(a,b)\notin E$ and $(c,b)\in E$ imply $(a,c)\notin E$.
Similarly, $(b,a)\in N$ and $(b,c)\notin N$ imply $(a,c)\notin N$.
We conclude that $(a,c)\notin E\cup N$, implying
$(a,c)\in (S\setminus E) \cup (W\setminus N)=A(G_1)$.

Analogously,
if $(a,b)\in W\setminus N$ and $(b,c)\in S\setminus E$,
then $(a,c)\in (S\setminus E)\cup (W\setminus N)=A(G_1)$.

Therefore, for any $a,b\in[n]$ and any
shortest path from $a$ to $b$ in $G_1$,
either all arcs are in $S\setminus E$ or all arcs are in $W\setminus N$.
Hence we have $(a,b)\in S\cup W$ whenever $b$ is reachable from $a$.
This implies $(b,a)\in N\cup E$,
and therefore $(b,a)$ is not an arc of $G_1$.
So $G_1$ is indeed acyclic.
\end{proof}

\begin{lemma}
\label{sequencepairconsistent}
Let
$\pi$ and $\rho$ be topological orders of $G_1$ and $G_2$, respectively.
Then $r_{\pi,\rho}$ represents $\plc{}$.
\end{lemma}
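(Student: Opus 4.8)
The plan is to verify directly the four implications that define ``$r_{\pi,\rho}$ represents $\plc{}$'', using only the defining property of a topological order: if $(a,b)\in A(G_1)$ then $a<_\pi b$, and if $(a,b)\in A(G_2)$ then $a<_\rho b$ (the acyclicity from Lemma~\ref{g1acyclic} is needed only so that such orders exist). First I would halve the work by symmetry. Proving the west- and south-implications for \emph{all} ordered pairs already yields the east- and north-implications: since $r_{\pi,\rho}(j,i)=\west$ is the same numerical condition as $r_{\pi,\rho}(i,j)=\east$, and $(j,i)\in W$ is equivalent to $(i,j)\in E$, applying the west-case to the reversed pair $(j,i)$ gives the east-case for $(i,j)$; likewise south yields north. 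So it remains to handle the west- and south-implications.

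For the west-case, suppose $r_{\pi,\rho}(i,j)=\west$, i.e.\ $i<_\pi j$ and $j<_\rho i$, and assume for contradiction that $(i,j)\notin W$. Since every pair of distinct elements lies in $N\cup S\cup E\cup W$, at least one of $S,N,E$ holds, and I would rule each out in turn. If $(i,j)\in S$, then $(i,j)\in S\setminus W\subseteq A(G_2)$, forcing $i<_\rho j$, contradicting $j<_\rho i$. Otherwise, if $(i,j)\in N$, then $(j,i)\in S$; as $(i,j)\notin W$ we have $(j,i)\notin E$, so $(j,i)\in S\setminus E\subseteq A(G_1)$ and $j<_\pi i$, contradicting $i<_\pi j$. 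In the remaining case $(i,j)\in E$ (hence $(i,j)\notin N$), we get $(i,j)\in E\setminus N\subseteq A(G_2)$, so $i<_\rho j$, again a contradiction. Thus $(i,j)\in W$. This short case analysis is the computational core of the lemma.

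The south-case is entirely analogous: assuming $i<_\pi j$, $i<_\rho j$ and $(i,j)\notin S$, one peels off the cases $(i,j)\in W$, then $(i,j)\in E$, then $(i,j)\in N$, using respectively the arcs $(j,i)\in E\setminus N\subseteq A(G_2)$ (forcing $j<_\rho i$), $(j,i)\in W\setminus N\subseteq A(G_1)$ (forcing $j<_\pi i$), and $(j,i)\in S\setminus E\subseteq A(G_1)$ (forcing $j<_\pi i$), each contradicting one of the two order assumptions. The one point requiring care is the ``double-relation'' configurations, where a pair satisfies two of $N,S,E,W$ at once (for instance north \emph{and} west): there one of $G_1,G_2$ may leave the pair's order unconstrained, but the arc that \emph{is} present always forbids the wrong output, while whichever output remains possible names a relation the pair genuinely satisfies. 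Making the case split exhaustive and non-overlapping across these configurations is the only real bookkeeping obstacle, and I expect it to be minor. Once west and south hold for every ordered pair, the symmetry reduction of the first paragraph supplies east and north, completing the proof.
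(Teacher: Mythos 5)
Your proof is correct and is essentially the paper's own argument: both rest solely on the fact that a topological order excludes back-arcs of $G_1$ and $G_2$, combined with the elementary structure of $N,S,E,W$ (every distinct pair lies in $N\cup S\cup E\cup W$, with $N\cap S=\emptyset$ and $E\cap W=\emptyset$) and the same symmetry reduction to the west/south cases. The paper organizes it as a direct set computation --- from $(b,a)\notin A(G_1)$ and $(b,a)\notin A(G_2)$ it concludes $(b,a)\in(N\cup E)\cap(N\cup W)=N$, and similarly for west --- while you run the contrapositive as a case analysis by contradiction, which is the same content in a different arrangement.
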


\begin{proof}
If $a <_{\pi} b$ and $a <_{\rho} b$,
then $(b,a)$ is neither an arc of $G_1$ nor of $G_2$.
Hence $(b,a) \notin A(G_1) = (S\setminus E)\cup (W\setminus N)$, so $(b,a) \in N \cup E$.
Similarly, as $(b,a) \notin A(G_2) = (S\setminus W)\cup (E\setminus N)$, we have $(b,a) \in N \cup W$.
This means $(b,a) \in (N \cup E) \cap (N \cup W) = N \cup (W \cap E) = N$, so $(a,b) \in S$.

If $a <_{\pi} b$ and $b <_{\rho} a$,
then $(b,a)$ is not an arc of $G_1$ and $(a,b)$ is not an arc of $G_2$.
Again, by $(b,a) \notin A(G_1)$, it follows that $(b,a) \in N \cup E$, and hence $(a,b) \in S \cup W$.
Moreover, as $(a,b) \notin A(G_2)$, we have $(a,b) \in N \cup W$.
Hence $(a,b) \in (S \cup W) \cap (N \cup W) = (S \cap N) \cup W = W$.
\end{proof}

 This proves Theorem \ref{thm_jerrum} and hence the well-known $(n!)^2$ upper bound.

 \subsection{Augmented digraphs}

Now we improve on it by adding some arcs to the digraphs $G_1$ and $G_2$:
 \begin{eqnarray*}
A(G_1') &:=& A(G_1)\cup \{ (a,b)\in N\cap W : a \text{ not reachable from } b \text{ in } G_1  \} \\
A(G_2') &:=& A(G_2)\cup \{ (a,b)\in N\cap E : a \text{ not reachable from } b \text{ in } G_2  \}
\end{eqnarray*}

\begin{lemma}
\label{g1primeacyclic}
$G_1'$ and $G_2'$ are acyclic.
\end{lemma}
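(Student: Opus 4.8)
```latex
The plan is to augment the acyclicity proof for $G_1$ and $G_2$ (Lemma~\ref{g1acyclic}) to account for the newly added arcs. The key structural fact established in the proof of Lemma~\ref{g1acyclic} is that on any shortest path in $G_1$, either all arcs lie in $S \setminus E$ or all arcs lie in $W \setminus N$; consequently, whenever $b$ is reachable from $a$ in $G_1$, we have $(a,b) \in S \cup W$. By symmetry the analogous statement holds for $G_2$. I would try to prove the analogue for $G_1'$: whenever $b$ is reachable from $a$ in $G_1'$, the pair $(a,b)$ lies in a set that cannot contain a ``return'' arc, and in particular $(b,a)$ is not an arc of $G_1'$, which yields acyclicity. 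By symmetry it suffices to treat $G_1'$.

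First I would recall that the new arcs of $G_1'$ are exactly the pairs $(a,b) \in N \cap W$ for which $a$ is not reachable from $b$ in $G_1$; the defining condition guarantees that adding $(a,b)$ does not close a directed cycle \emph{through the original $G_1$}, but I must rule out cycles that use two or more new arcs, or new arcs combined with old ones in a way the single-arc reachability condition does not immediately forbid. The natural strategy is to understand what the new arcs contribute to reachability. Since a new arc $(a,b)$ satisfies $(a,b) \in N$ (so $(b,a) \in S$) and $(a,b) \in W$, I would analyze how concatenating a new arc with subsequent old or new arcs propagates membership in the partial orders $N, S, E, W$, mirroring the transitivity arguments used for Lemma~\ref{g1acyclic}. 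The goal is to show that along any path in $G_1'$ the endpoints satisfy a relation incompatible with a reverse arc.

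The main obstacle I expect is handling \emph{mixed} paths that interleave original arcs (which give $(a,b) \in S \cup W$) with new arcs (which give $(a,b) \in N \cap W$, so the $S$-component is replaced by its opposite $N$). The clean dichotomy ``all arcs in $S \setminus E$ or all in $W \setminus N$'' that made Lemma~\ref{g1acyclic} easy may break, since a new arc is of a different character (it lies in $N$, not $S$). I would therefore look for an invariant that survives the augmentation: perhaps that reachability in $G_1'$ still implies $(a,b) \in W$, or that one can restrict attention to shortest paths and argue that a new arc can only appear in a controlled position (e.g., the partial order on the $x$-coordinates induced by $W$ is respected by every arc of $G_1'$, so any directed path strictly advances in the $W$-order and a cycle is impossible). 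Establishing such a monotone potential compatible with all arcs of $G_1'$ is the crux; once it is in place, acyclicity of both $G_1'$ and $G_2'$ follows immediately by the same symmetry that reduces $G_2'$ to $G_1'$.
```
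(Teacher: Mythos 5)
Your setup is fine as far as it goes---the symmetry reduction to $G_1'$, the observation that a single added arc cannot close a circuit by construction, and the diagnosis that the clean dichotomy of Lemma~\ref{g1acyclic} breaks on paths mixing old and new arcs---but the proposal stops exactly where the work begins: you say yourself that ``establishing such a monotone potential compatible with all arcs of $G_1'$ is the crux,'' and that crux is never established. Worse, both concrete invariants you float are false. An arc of $G_1$ in $S \setminus E$ need not lie in $W$: for $a = [0,1]\times[0,1]$ and $b = [0,1]\times[2,3]$ we have $(a,b) \in S \setminus E \subseteq A(G_1) \subseteq A(G_1')$, yet $(a,b) \notin W$. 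So neither ``reachability in $G_1'$ implies $(a,b)\in W$'' nor ``every arc of $G_1'$ advances in the $W$-order (equivalently, in $x$-coordinates)'' can hold, and no potential built from $W$ alone can work. The weaker invariant ``reachability in $G_1'$ implies $(a,b)\in S\cup W$'' is true, but proving it by induction along a path is precisely the mixed-path problem you identified; in the paper it becomes available only \emph{after} acyclicity is known (take topological orders and apply Lemma~\ref{sequencepairconsistent}), so invoking it here would be circular.

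The paper's proof does not exhibit a global potential at all; it is a minimal-counterexample argument. Take a circuit $C$ of $G_1'$ with fewest arcs; it must contain at least two arcs of $A(G_1')\setminus A(G_1)$. For the maximal path $v_k,\ldots,v_1,v_0,b$ in $C$ whose only new arc is the last one $(v_0,b)$, one proves by induction on $i$ that $(v_i,b)\in N\cap W$ for all $i$, using three ingredients your sketch never brings into play: transitivity of $E$ (from $(v_i,v_{i-1})\notin E$ and $(b,v_{i-1})\in E$ conclude $(v_i,b)\notin E$); minimality of $C$ (so $(v_i,b)$ is not an arc of $G_1$, which together with $(v_i,b)\notin E$ forces $(v_i,b)\in N$); and the non-reachability condition defining the new arcs (if $(v_i,b)\notin W$, then $(b,v_i)\in S\setminus E$ would be an arc of $G_1$, making $v_0$ reachable from $b$ in $G_1$ and contradicting that $(v_0,b)$ was added). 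With this claim, the heads $b_1,\ldots,b_l$ of the new arcs of $C$ satisfy $(b_{i-1},b_i)\in N\cap W$ cyclically, contradicting that $N$ and $W$ are strict partial orders. Without this argument, or a genuine substitute for it, your proposal restates the problem rather than solving it.
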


\begin{proof}
Again it suffices to consider $G_1'$ (for $G_2'$ again exchange $W$ and $E$).

Suppose $G_1'$ contains a circuit. Consider a circuit $C$ with smallest number of arcs.
Of course, $C$ must contain at least two arcs from $A(G_1')\setminus A(G_1)$ because $G_1$ is acyclic (Lemma \ref{g1acyclic})
and any single added arc does not create a circuit by construction.

Let $v_k,v_{k-1},\ldots,v_1,v_0,b$ be the vertices of a path in $C$
in which only the last arc $(v_0,b)$ does not belong to $G_1$.

\noindent
{\bf Claim:} $(v_i,b)\in N\cap W$ for all $i=0,\ldots,k$.

We show the Claim by induction on $i$.
It is true for $i=0$ because $(v_0,b)\in A(G_1')\setminus A(G_1)$. Let now $i\ge 1$.
As $(v_i,v_{i-1})$ is an arc of $G_1$, $(v_i,v_{i-1})\notin E$.
Moreover, $(b,v_{i-1})\in E$ by the induction hypothesis.
As $E$ is transitive, $(v_i,b)\notin E$.

Now $(v_i,b)$ is not an arc of $G_1$ because $C$ is a shortest circuit.
As $(v_i,b)\notin E$ , this implies $(v_i,b)\in N$.

Finally suppose that $(v_i,b)\notin W$.
Then $(b,v_i)\in S\setminus E$, and hence $(b,v_i)\in A(G_1)$.
Then $b,v_i,v_{i-1},\ldots,v_0$ is a path from $b$ to $v_0$ in $G_1$.
This is a contradiction to the fact that $(v_0,b)\in A(G_1')\setminus A(G_1)$.
The Claim is proved.

Now let $(a_i,b_i)$, $i=1,\ldots,l$, be the arcs of $C$ that do not belong to $G_1$.
We had $l\ge 2$, and by the Claim $(b_{i-1},b_i)\in N\cap W$ for all $i=1,\ldots,l$ (where $b_0:=b_l$).
This is impossible because $N$ and $W$ are strict partial orders.
\end{proof}

We now consider topological orders of $G_1'$ and $G_2'$. Since we only added arcs,
Lemma \ref{sequencepairconsistent} still applies. We will show that
only certain (much fewer) sequence pairs can occur as topological orders of $G_1'$ and $G_2'$.

\subsection{Bad quartets}

For a sequence pair $(\pi,\rho)$ we say that $(a,b,c,d)\in[n]^4$ is a \stress{bad quartet}
if the following three conditions hold:
\begin{itemize}
\item $a <_{\pi} b <_{\pi} c <_{\pi} d$,
\item $b <_{\rho} a <_{\rho} d <_{\rho} c$,
\item there is no $e\in[n]$ with  $b <_{\pi} e <_{\pi} c$ and $a <_{\rho} e <_{\rho} d$.
\end{itemize}

\begin{observation} \label{badquartetplane}
The number of sequence pairs without a bad quartet is $n!$ times the number of plane permutations.
\end{observation}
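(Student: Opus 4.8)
The plan is to exhibit a bijection between sequence pairs without a bad quartet and pairs consisting of an arbitrary permutation together with a plane permutation. Given a sequence pair $(\pi,\rho)$, I would set $\sigma := \rho \circ \pi^{-1}$, the permutation determined by $\sigma(\pi(i)) = \rho(i)$ for all $i$. Since then $\rho = \sigma \circ \pi$, the assignment $(\pi,\rho) \mapsto (\pi,\sigma)$ is a bijection from ordered pairs of permutations to ordered pairs of permutations. The whole argument reduces to proving that $(\pi,\rho)$ has no bad quartet if and only if $\sigma$ is plane, since this condition does not involve $\pi$ at all.

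To establish that equivalence, first I would rewrite the three defining conditions of a bad quartet in terms of $\sigma$. Writing $p_x := \pi(x)$ and using $\rho(x) = \sigma(p_x)$, the first condition $a <_\pi b <_\pi c <_\pi d$ becomes $p_a < p_b < p_c < p_d$, and the second condition $b <_\rho a <_\rho d <_\rho c$ becomes $\sigma(p_b) < \sigma(p_a) < \sigma(p_d) < \sigma(p_c)$. Reading the values at the positions $p_a < p_b < p_c < p_d$ from left to right, this is exactly an occurrence of the un-barred pattern $2143$ in $\sigma$. Because $\pi$ is a bijection of $[n]$, the map $x \mapsto p_x$ sends the quartet $(a,b,c,d)$ to a genuine quadruple of positions, so quartets satisfying the first two conditions correspond bijectively to occurrences of $2143$ in $\sigma$.

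Next I would match the third condition with the ``bar''. That condition forbids an $e$ with $b <_\pi e <_\pi c$ and $a <_\rho e <_\rho d$; setting $q := \pi(e)$ this says there is no position $q$ with $p_b < q < p_c$ and $\sigma(p_a) < \sigma(q) < \sigma(p_d)$. This is precisely the negation of the plane condition applied to the occurrence $p_a < p_b < p_c < p_d$: being plane requires, for every such occurrence of $2143$, an index $k$ with $p_b < k < p_c$ and $\sigma(p_a) < \sigma(k) < \sigma(p_d)$, namely the missing ``$3$'' turning $2143$ into $21354$. Hence a bad quartet of $(\pi,\rho)$ is exactly an occurrence of $2143$ in $\sigma$ that fails to extend to $21354$, so $(\pi,\rho)$ has no bad quartet if and only if $\sigma$ avoids $21\bar 3 54$, i.e.\ $\sigma$ is plane.

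Finally, I would conclude by counting. Under the bijection $(\pi,\rho)\mapsto(\pi,\sigma)$, the sequence pairs without a bad quartet correspond exactly to the pairs $(\pi,\sigma)$ with $\sigma$ plane; since $\pi$ ranges freely over all $n!$ permutations and $\sigma$ independently over all plane permutations, their number is $n!$ times the number of plane permutations. The only delicate point is the bookkeeping in the middle step: one must verify that the positional and value inequalities line up with the exact form of the barred pattern $21\bar 3 54$ from the definition of plane, and check that the auxiliary index $e$ (resp.\ $k$) cannot coincide with $a,b,c,d$ (resp.\ their positions), which follows immediately from the strict chains in the first two conditions.
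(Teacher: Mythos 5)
Your proposal is correct and is precisely the argument behind the paper's Observation~\ref{badquartetplane}, which the paper states without an explicit proof: the bijection $(\pi,\rho)\mapsto(\pi,\sigma)$ with $\sigma=\rho\circ\pi^{-1}$, under which the three bad-quartet conditions translate exactly into a non-extendable occurrence of $2143$ in $\sigma$, i.e.\ an occurrence of the barred pattern $21\bar 3 54$, so that ``no bad quartet'' holds iff $\sigma$ is plane. Your bookkeeping of the positional and value inequalities matches the paper's definitions, so the count $n!$ times the number of plane permutations follows as you state.
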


We call a bad quartet $(a,b,c,d)$ \stress{extreme} if
there is no $e\in[n]$ with $b <_{\pi} e <_{\pi} c$ and
there is no $f\in[n]$ with $a <_{\rho} f <_{\rho} d$.
In other words: $b$ and $c$ are consecutive in $\pi$, and $a$ and $d$ are consecutive in $\rho$.
See Figure \ref{figbadquartet}.

   \begin{figure}[ht]
   \centering
   \resizebox{0.46\textwidth}{!}{%
   \begin{tikzpicture}[scale=0.8]
      \drawPermutationImpl{a,b,c,d}{b,a,d,c}{\drawPermutationTextAddPi}{\drawPermutationTextAddRho}
      \fill [forbiddencoldark] (min |- a) rectangle (max |- d);
      \fill [forbiddencoldark] (b |- min) rectangle (c |- max);
      \fill [forbiddencol] (b |- a) rectangle (c |- d);
   \end{tikzpicture}
   }
\caption{A bad quartet $(a,b,c,d)$.
The light gray square in the center is empty by the third condition in the definition of bad quartets.
The bad quartet is extreme iff the four darker areas are empty, too.
\label{figbadquartet}}
 \end{figure}
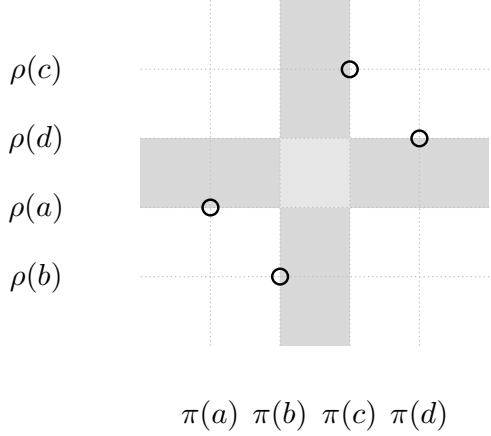

\begin{lemma}
\label{extremebadquartets}
If a sequence pair $(\pi,\rho)$ has a bad quartet, it has an extreme bad quartet.
\end{lemma}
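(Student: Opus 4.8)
The plan is to take a bad quartet that is extremal with respect to a suitable size measure and argue that extremality forces it to be extreme. Concretely, for a bad quartet $(a,b,c,d)$ define its \emph{width} to be
\[
  w(a,b,c,d) := \bigl(\pi(c)-\pi(b)\bigr) + \bigl(\rho(d)-\rho(a)\bigr),
\]
which is well-defined and positive because $b <_\pi c$ and $a <_\rho d$. Note that $(a,b,c,d)$ is extreme exactly when both summands equal $1$, i.e.\ when $b,c$ are consecutive in $\pi$ and $a,d$ are consecutive in $\rho$, since $\pi$ and $\rho$ are bijections onto $[n]$. I would choose a bad quartet minimizing $w$ and show that if it were not extreme, one could produce another bad quartet of strictly smaller width, a contradiction.

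For the first reduction, suppose $b$ and $c$ are not consecutive in $\pi$, so there is an element $e$ with $b <_\pi e <_\pi c$. The third condition of a bad quartet forbids $a <_\rho e <_\rho d$, so $e <_\rho a$ or $d <_\rho e$. In the first case I replace $b$ by $e$, claiming $(a,e,c,d)$ is again a bad quartet; in the second case I replace $c$ by $e$, using $(a,b,e,d)$. In either case the new quartet keeps $a$ and $d$, so the $\rho$-summand $\rho(d)-\rho(a)$ is unchanged, while the $\pi$-summand strictly decreases (to $\pi(c)-\pi(e)$ or $\pi(e)-\pi(b)$).

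The second reduction is symmetric in the roles of $\pi$ and $\rho$. If $a,d$ are not consecutive in $\rho$, pick $f$ with $a <_\rho f <_\rho d$; the emptiness condition forbids $b <_\pi f <_\pi c$, so $f <_\pi b$ or $c <_\pi f$, and I replace $a$ by $f$ (using $(f,b,c,d)$) or $d$ by $f$ (using $(a,b,c,f)$). This keeps $b$ and $c$, so the $\pi$-summand is unchanged while the $\rho$-summand strictly decreases. Hence a width-minimal bad quartet admits neither reduction, so both summands equal $1$ and it is extreme.

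The main work---and the place where the argument could go wrong---is verifying that each replacement really yields a bad quartet, i.e.\ that all three defining conditions survive. The ordering chains in $\pi$ and $\rho$ are immediate from transitivity of the strict orders together with the position of $e$ (resp.\ $f$). The delicate point is the emptiness condition: in every case the new ``forbidden'' rectangle is contained in the old one (for $(a,e,c,d)$ the $\pi$-interval $(\pi(e),\pi(c))$ sits inside $(\pi(b),\pi(c))$; for $(f,b,c,d)$ the $\rho$-interval $(\rho(f),\rho(d))$ sits inside $(\rho(a),\rho(d))$; and analogously in the other two cases), so emptiness of the larger rectangle implies emptiness of the smaller one. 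Once this containment is checked in each of the four cases, the two reductions are seen to act independently---each shrinks one coordinate width while leaving the other fixed---and the minimality argument closes immediately.
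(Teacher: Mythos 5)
Your proof is correct and follows essentially the same route as the paper: the paper minimizes $\Phi := d_{\pi}(b,c)+d_{\rho}(a,d)$ (the count of elements strictly between $b,c$ in $\pi$ plus those between $a,d$ in $\rho$), which equals your width $w$ minus $2$, and then applies exactly your four replacement cases $(a,e,c,d)$, $(a,b,e,d)$, $(f,b,c,d)$, $(a,b,c,f)$ to contradict minimality. Your explicit verification that the new forbidden rectangle is contained in the old one is the same (implicit) justification the paper relies on, so there is nothing to add.
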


\begin{proof}
Let $d_{\pi}(a,b):=|\{e\in[n]:  a <_{\pi} e <_{\pi} b \}|$
denote the number of elements in between $a$ and $b$ in permutation $\pi$.
Let $(a,b,c,d)$ be a bad quartet such that $\Phi:=d_{\pi}(b,c)+d_{\rho}(a,d)$ is minimum.
If $\Phi$ is zero, $(a,b,c,d)$ is extreme.

Otherwise, we consider two cases.
Suppose first that there is an $e\in[n]$ with $b <_{\pi} e <_{\pi} c$.
If $e<_{\rho} a$, then $(a,e,c,d)$ is a bad quartet with smaller $\Phi$.
Otherwise, since $(a,b,c,d)$ is a bad quartet, we have $d<_{\rho} e$, and $(a,b,e,d)$ is a bad quartet with smaller $\Phi$.

Secondly, suppose that there is an $e\in[n]$ with $a <_{\rho} e <_{\rho} d$.
If $c<_{\pi} e$, then $(a,b,c,e)$ is a bad quartet with smaller $\Phi$.
Otherwise, we have $e <_{\pi} b$, and $(e,b,c,d)$ is a bad quartet with smaller $\Phi$.
\end{proof}

\subsection{New upper bound}

Now we get the better upper bound:

\begin{lemma}
\label{nobadquartet}
Let $\pi$ and $\rho$ be topological orders of $G_1'$ and $G_2'$, respectively.
Then $(\pi,\rho)$ has no bad quartet.
\end{lemma}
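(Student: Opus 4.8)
The plan is to assume that $(\pi,\rho)$ has a bad quartet and derive a contradiction. First I would invoke Lemma~\ref{extremebadquartets} to replace it by an \emph{extreme} bad quartet $(a,b,c,d)$, so that in addition $b,c$ are consecutive in $\pi$ and $a,d$ are consecutive in $\rho$; this consecutiveness is what will make the added arcs bite. Since we only added arcs, $\pi$ and $\rho$ are also topological orders of $G_1$ and $G_2$, so Lemma~\ref{sequencepairconsistent} applies and $r_{\pi,\rho}$ represents $P$. Reading the four spatial values off the bad-quartet inequalities then yields $r_{\pi,\rho}(a,b)=r_{\pi,\rho}(c,d)=\west$ and $r_{\pi,\rho}(b,c)=r_{\pi,\rho}(a,d)=\south$, hence the genuine relations $(a,b),(c,d)\in W$ and $(b,c),(a,d)\in S$ in $P$, and therefore $(c,b),(d,a)\in N$. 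These are the facts I would work with.

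The core idea is that one of the two ``diagonal'' pairs $(c,b)$ or $(d,a)$ must occur as an added arc oriented against the topological order. I would split on whether $c$ is west of $b$. If it is, then $(c,b)\in N\cap W$ while $(b,c)\notin A(G_1)$ (because $(b,c)\in E$), and I would claim that $c$ is unreachable from $b$ in $G_1$: any $b$–$c$ path lies in $G_1\subseteq G_1'$ and is therefore $\pi$-increasing, so an internal vertex would sit strictly between the consecutive $b$ and $c$ in $\pi$, while a length-one path would be the arc $(b,c)$ itself — both impossible. Unreachability is exactly the trigger for adding $(c,b)$ to $G_1'$, and $(c,b)\in A(G_1')$ forces $c<_\pi b$, contradicting $b<_\pi c$.

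For the complementary case I would use a short $x$-coordinate chain: if $c$ is not west of $b$, then $\xmax(a)\le\xmin(b)<\xmax(c)\le\xmin(d)$ shows that $a$ is west of $d$, whence $(d,a)\in N\cap E$ and $(a,d)\notin A(G_2)$. Running the identical reachability argument in $G_2$ — now exploiting that $a,d$ are consecutive in $\rho$ — shows $d$ is unreachable from $a$ in $G_2$, so the added arc $(d,a)\in A(G_2')$ forces $d<_\rho a$, contradicting $a<_\rho d$.

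I expect the main obstacle to be the middle situation in which $b$ and $c$ overlap in their $x$-extent, where $(c,b)$ is merely in $N$ and neither graph's added arcs touch it directly. The device that rescues this is the coordinate chain above, which trades ``$c$ not west of $b$'' for ``$a$ west of $d$'' and moves the argument onto the companion graph $G_2'$. This routing is forced by the asymmetry of extremeness: it only guarantees that $b,c$ are consecutive in $\pi$ and $a,d$ are consecutive in $\rho$, so the $(c,b)$ argument must go through $G_1'$/$\pi$ and the $(d,a)$ argument through $G_2'$/$\rho$. Matching each critical pair to the correct graph is the one genuinely subtle point; everything else is the reachability observation applied twice.
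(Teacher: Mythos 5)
Your proof is correct and follows essentially the same route as the paper: the same reachability arguments (using extremeness to rule out intermediate vertices in $\pi$ resp.\ $\rho$) show that $(c,b)$ or $(d,a)$ would be an added arc of $G_1'$ resp.\ $G_2'$ contradicting the topological order, and the same coordinate chain $\xmax(a)\le\xmin(b)<\xmax(c)\le\xmin(d)$ ties them together. The only difference is organizational --- the paper proves both claims $(c,b)\notin W$ and $(a,d)\notin W$ unconditionally and then derives the cyclic inequality, whereas you run a case split on whether $c$ is west of $b$; the content is identical.
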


\begin{proof}
First note that $r_{\pi,\rho}$ represents $P$ by Lemma \ref{sequencepairconsistent}.
If there is a bad quartet, there is an extreme one by Lemma \ref{extremebadquartets}.
So suppose that $(a,b,c,d)$ is an extreme bad quartet:
Then, $a <_{\pi} b <_{\pi} c <_{\pi} d$ and there is no $e\in[n]$ with $b <_{\pi} e <_{\pi} c$.
Moreover, $b <_{\rho} a <_{\rho} d <_{\rho} c$ and there is no $f\in[n]$ with $a <_{\rho} f <_{\rho} d$.

As $r_{\pi,\rho}$ represents $P$, we have $(a,c),(a,d),(b,c),(b,d)\in S$ and $(a,b),(c,d)\in W$.

\noindent
{\bf Claim 1:} $(c,b)\notin W$.

Suppose that $(c,b)\in W$. Then $(b,c)\in E$, and thus $(b,c)\notin A(G_1)$.
Therefore $c$ is not reachable from $b$ in $G_1$ as
any vertex on a $b$-$c$-path would have to be in between $b$ and $c$ in the topological order $\pi$.
But then $(c,b)\in N\cap W$ would be an arc of $G_1'$, contradicting $b <_{\pi} c$.

\noindent
{\bf Claim 2:} $(a,d)\notin W$.

Suppose that $(a,d)\in W$. Then $(a,d)\notin A(G_2)$.
Therefore $d$ is not reachable from $a$ in $G_2$ as
any vertex on an $a$-$d$-path would have to be in between $a$ and $d$ in the topological order $\rho$.
But then $(d,a)\in N\cap E$ would be an arc of $G_2'$, contradicting $a <_{\rho} d$.

The two claims are proved. However, they contradict each other:
together with $(a,b),(c,d)\in W$ they imply
$\xmax(a) \le \xmin(b) < \xmax(c) \le \xmin(d) < \xmax(a)$.
\end{proof}

We conclude:

\begin{theorem}
Let $C = \frac{11+5 \sqrt 5}{2} \leq \bigCub{}$. There is a complete set of representations for $n$ with $\bigO(n! \cdot \frac{C^n}{n^6})$ elements.
\end{theorem}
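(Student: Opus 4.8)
The plan is to exhibit one explicit, placement-independent set of representations, prove it is complete by letting the augmented digraphs $G_1'$ and $G_2'$ supply a suitable sequence pair for each placement, and then bound its size purely by counting. Concretely, I would set
\[
  R := \bigl\{\, r_{\pi,\rho} : (\pi,\rho) \text{ is a sequence pair on } [n] \text{ having no bad quartet} \,\bigr\},
\]
and show that $R$ is simultaneously complete and of the claimed cardinality. The conceptual point is that $R$ is defined in purely combinatorial terms; the digraphs enter only to certify that, for every placement, some member of $R$ works.

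First I would establish completeness. Fix an arbitrary feasible placement $\plc{}$ of size $n$ and form the digraphs $G_1'$ and $G_2'$ associated with it. By Lemma~\ref{g1primeacyclic} both are acyclic, so each admits a topological order; let $\pi$ be a topological order of $G_1'$ and $\rho$ a topological order of $G_2'$. Since $G_1'$ and $G_2'$ arise from $G_1$ and $G_2$ only by \emph{adding} arcs, $\pi$ and $\rho$ are in particular topological orders of $G_1$ and $G_2$, so Lemma~\ref{sequencepairconsistent} guarantees that $r_{\pi,\rho}$ represents $\plc{}$. By Lemma~\ref{nobadquartet} the pair $(\pi,\rho)$ has no bad quartet, whence $r_{\pi,\rho}\in R$. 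Thus $R$ contains a representation of every feasible placement of size $n$, i.e.\ $R$ is complete.

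Next I would bound $|R|$. As the surjection $(\pi,\rho)\mapsto r_{\pi,\rho}$ from the bad-quartet-free sequence pairs onto $R$ shows, $|R|$ is at most the number of sequence pairs without a bad quartet. Observation~\ref{badquartetplane} identifies this number as $n!$ times the number of plane permutations on $[n]$, and Theorem~\ref{thm_numberofplaneperm} evaluates the latter as $\Theta\bigl(C^n/n^6\bigr)$ with $C=\frac{11+5\sqrt5}{2}$. Multiplying yields $|R|=\bigO\bigl(n!\cdot C^n/n^6\bigr)$, exactly as claimed.

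The step I expect to warrant the most care is the innocuous-looking invocation of Lemma~\ref{sequencepairconsistent} after augmentation: one must check that enlarging the arc sets cannot destroy the representation property, which holds because adding arcs only shrinks the set of admissible topological orders and never relaxes the constraints that make $r_{\pi,\rho}$ consistent. Beyond that, the theorem is a matter of assembly, since all the genuine difficulty has already been discharged — the acyclicity of the augmented digraphs (Lemma~\ref{g1primeacyclic}), the structural guarantee that their topological orders avoid bad quartets (Lemma~\ref{nobadquartet}, reduced to extreme bad quartets via Lemma~\ref{extremebadquartets}), the counting correspondence of Observation~\ref{badquartetplane}, and above all the external enumeration of plane permutations in Theorem~\ref{thm_numberofplaneperm}. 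Given these inputs, it remains only to combine completeness with the cardinality estimate and read off the asymptotics.
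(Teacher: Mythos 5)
Your proposal is correct and takes essentially the same route as the paper: completeness via Lemmata~\ref{g1primeacyclic}, \ref{sequencepairconsistent}, and \ref{nobadquartet} applied to topological orders of the augmented digraphs, followed by counting via Observation~\ref{badquartetplane} and Theorem~\ref{thm_numberofplaneperm}. The one step you flag for care --- that adding arcs preserves the applicability of Lemma~\ref{sequencepairconsistent} --- is exactly the remark the paper itself makes before stating Lemma~\ref{nobadquartet}.
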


\begin{proof}
By Lemmata \ref{sequencepairconsistent}, \ref{g1primeacyclic}, and \ref{nobadquartet},
the set of functions $r_{\pi,\rho}$ for all sequence pairs $(\pi,\rho)$ without bad quartets is complete.
By Observation \ref{badquartetplane}, the number of sequence pairs without bad quartets is $n!$ times the number of plane permutations.
By Theorem \ref{thm_numberofplaneperm}, the number of plane permutations is $\Theta(\frac{C^n}{n^6})$.
\end{proof}

Note that this result not only implies an improved asymptotic behavior compared to classical sequence pairs,
but also yields a strict improvement for all $n \geq 4$.

\section{Lower bound} \label{lowerbound:section}

\newcommand{\downrel}{\downarrow}
\newcommand{\uprel}{\uparrow}
\newcommand{\floor}{F}
\newcommand{\ceil}{C}
\newcommand{\project}{\operatorname{pr}}
\newcommand{\projectset}{\operatorname{PR}}
\newcommand{\projectsetup}{\projectset{}_{\uparrow}}
\newcommand{\projectsetdown}{\projectset{}_{\downarrow}}
\newcommand{\projectup}{\project{}_{\uparrow}}
\newcommand{\projectdown}{\project{}_{\downarrow}}
\newcommand{\vertices}{V}
\newcommand{\downvertices}{V_{\downarrow}}
\newcommand{\upvertices}{V_{\uparrow}}
\newcommand{\downgraph}{G_{\downarrow}}
\newcommand{\bbox}{BB}
\newcommand{\open}[1]{\operatorname{op}(#1)}
\newcommand{\close}[1]{\operatorname{cl}(#1)}
\newcommand{\stepline}[1]{S_{#1}}
\newcommand{\steppoint}[1]{s_{#1}}
\newcommand{\pparensize}{\big}


In this section, we prove that every complete set of representations
has $\Omega \bigl( n! \cdot \frac{(4 + 2 \sqrt2)^n}{n^4} \bigr)$ elements.
The idea is to generate a large number of feasible placements which all allow only one ``useful'' representation,
where no representation occurs twice.
We construct these placements using biplane permutations, which have been examined in
\cite{DBLP:journals/combinatorics/AsinowskiBBMP13}.
The authors of~\cite{DBLP:journals/combinatorics/AsinowskiBBMP13} study orders on \stress{segments} of floorplans,
which have a very similar structure to the \stress{rectangles} in the placements considered in this section.

\subsection{Forcing placements and canonical representations}

\begin{definition}
 Let $\plc{}$ be a feasible placement of size $n$ and $i,j\in[n]$ with $i \neq j$.
 We say that a spatial relation $\singleRel{} \in \{ \west,\south,\east,\north \}$  is \stress{forced} for $(i, j)$ if there is a sequence
 of rectangles $i = a_1, \ldots, a_k = j$ such that for all $1 \leq m < k$, the only spatial relation of $(a_m, a_{m+1})$ in $\plc{}$ is $\singleRel{}$.
\end{definition}

\begin{observation}
 Let $\plc{}$ be a feasible placement of size $n$ and $i,j \in [n]$ with $i\not=j$.
 If only one spatial relation holds for $(i,j)$ in $\plc{}$, then this spatial relation is forced for $(i,j)$.
\end{observation}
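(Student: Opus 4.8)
The plan is to read this off directly from the definition of \emph{forced}, using the degenerate sequence of length two. The crucial point is that the definition of a spatial relation being forced for $(i,j)$ permits a sequence $i = a_1, \ldots, a_k = j$ of \emph{any} length $k \geq 2$; in particular it allows $k = 2$, i.e.\ a single step directly from $i$ to $j$ with no intermediate rectangles. So no genuine chaining argument is needed.

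Concretely, I would let $\singleRel{}$ denote the unique spatial relation that holds for $(i,j)$ in $\plc{}$ (which exists by hypothesis, since at least one relation always holds for a feasible placement and here exactly one does). Then I would exhibit the trivial sequence $i = a_1, a_2 = j$, noting that $k = 2$ is legitimate because $i \neq j$. To verify that this witnesses $\singleRel{}$ being forced, I would check the single required condition: for $m = 1$ (the only index with $1 \leq m < k = 2$), the pair $(a_m, a_{m+1}) = (i,j)$ must have $\singleRel{}$ as its only spatial relation. But this is exactly the assumption of the observation. Hence $\singleRel{}$ is forced for $(i,j)$, as claimed.

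There is essentially no obstacle here: the statement is immediate once one notices that the forcing definition subsumes the base case of a one-step path. The only thing to be careful about is invoking $i \neq j$ so that the length-two sequence is well defined, and matching the quantifier ``for all $1 \leq m < k$'' against the lone index $m = 1$. I would therefore expect the proof to be a single short sentence rather than a multi-step argument.
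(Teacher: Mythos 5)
Your proof is correct and matches the paper's intent exactly: the paper states this as an observation with no written proof precisely because it is immediate from the definition of \emph{forced} via the trivial two-element sequence $i = a_1, a_2 = j$, which is exactly your argument. Nothing is missing; invoking $i \neq j$ and checking the single index $m=1$ is all that is required.
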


\begin{observation}
\label{forced::transitivity::observation}
 Let $\plc{}$ be a feasible placement of size $n$, let $i,j,k \in [n]$ and let $\singleRel{} \in \{ \west,\south,\east,\north \}$ be a spatial relation.
 If $\singleRel{}$ is forced for $(i,j)$ and $(j,k)$ in $\plc{}$, then $\singleRel{}$ is also forced for $(i,k)$ in $\plc{}$.
\end{observation}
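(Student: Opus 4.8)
The plan is to prove this transitivity property by directly concatenating the two chains that witness the forcing. By hypothesis, since $\singleRel{}$ is forced for $(i,j)$, there is a sequence $i = a_1, \ldots, a_p = j$ in which each consecutive pair $(a_m, a_{m+1})$ admits $\singleRel{}$ as its only spatial relation in $\plc{}$; likewise, since $\singleRel{}$ is forced for $(j,k)$, there is a sequence $j = c_1, \ldots, c_q = k$ with the same property for each of its consecutive pairs.

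First I would glue these two sequences at their shared endpoint $j$, forming the combined sequence
\[
 i = a_1, \ldots, a_p = j = c_1, \ldots, c_q = k .
\]
Every consecutive pair appearing in this combined sequence is a consecutive pair of one of the two original sequences — the junction $a_p = c_1 = j$ contributes no new pair — so for each such pair the only spatial relation in $\plc{}$ is $\singleRel{}$. This is precisely the witness required by the definition of \emph{forced}, now applied to the pair $(i,k)$, which completes the argument.

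I do not expect a genuine obstacle here. The only point worth noting is that the definition of a forcing sequence does not require its members to be distinct, so the concatenation is legitimate even if the $a$- and $c$-indices overlap or repeat; no simplicity assumption is ever used. Consequently no case distinction on the value of $\singleRel{}$ is needed, and transitivity follows uniformly for all four spatial relations.
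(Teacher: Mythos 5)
Your proposal is correct and matches the argument the paper intends: the statement is labeled an Observation precisely because the witnessing chains for $(i,j)$ and $(j,k)$ concatenate at $j$ into a chain witnessing that $\singleRel{}$ is forced for $(i,k)$. Your remark that the definition imposes no distinctness on the chain's members is accurate and disposes of the only conceivable objection (and in fact, since the coordinate bounds increase strictly along any such chain, the concatenated chain automatically has distinct entries, so $i \neq k$ and the pair $(i,k)$ is legitimate).
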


\begin{lemma}
\label{lowerbound:forced_relation:at_most_one:lemma}
 Let $\plc{} = \placement$ be a feasible placement of size $n$ and $i,j \in [n]$ with $i\neq j$.
 Then there is at most one forced spatial relation for $(i,j)$ in $\plc{}$.
\end{lemma}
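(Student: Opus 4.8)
The plan is to show first that a single forced relation already determines the relative position of $i$ and $j$, and then to rule out, in two cases, that two relations are forced simultaneously.

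I would start by recording the basic consequence of forcing. If $\west$ is forced for $(i,j)$ via a sequence $i=a_1,\dots,a_k=j$, then each step satisfies $\xmax(a_m)\le\xmin(a_{m+1})$, and since $\xmin(a_{m+1})<\xmax(a_{m+1})$ these inequalities chain together to give $\xmax(i)\le\xmin(j)$. The analogous implications hold for the other three directions, e.g. $\south$ forced $\Rightarrow\ymax(i)\le\ymin(j)$. This immediately disposes of two \emph{opposite} forced relations: if $\west$ and $\east$ were both forced we would obtain $\xmax(i)\le\xmin(j)<\xmax(j)\le\xmin(i)<\xmax(i)$, a contradiction, and symmetrically for $\south$ and $\north$.

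The remaining, genuinely harder, case is that of two \emph{perpendicular} forced relations; by the symmetries among the four directions I may assume they are $\west$ and $\south$. Here I would use the finer structure of the defining sequences. A step being \emph{uniquely} $\west$ means that west holds and the two rectangles properly overlap in their $y$-coordinates (otherwise $\south$ or $\north$ would hold as well). Hence along the west sequence the $x$-projections are pairwise disjoint and increase from left to right, while consecutive $y$-projections overlap, so the union of the west sequence's $y$-projections is a single interval covering $[\ymin(i),\ymax(j)]$; symmetrically the south sequence has disjoint increasing $y$-projections and its $x$-projections cover $[\xmin(i),\xmax(j)]$. Splitting and recombining sequences (cf.\ Observation~\ref{forced::transitivity::observation}), I would also pass to a minimal counterexample in which the two sequences meet only in $i$ and $j$: if some $v\neq i,j$ lay on both, then $(i,v)$ would already have $\west$ and $\south$ forced along strictly shorter sequences.

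The crux is then a crossing argument. The two sequences are monotone ``staircases'' both running from $i$ (bottom-left) to $j$ (top-right), and the goal is to produce one west rectangle and one south rectangle that share interior points, contradicting feasibility of $\plc{}$. The mechanism is that the west staircase covers every height in $[\ymin(i),\ymax(j)]$ without gaps while the south staircase covers every width in $[\xmin(i),\xmax(j)]$ without gaps, so the two staircases cannot be routed past one another; where they cross, a ``tall'' west rectangle and a ``wide'' south rectangle must meet in a region of positive area. I expect to make this precise by a discrete intermediate-value argument that sweeps along one chain and records on which side of the other chain's frontier each rectangle lies: the side must switch between $i$ and $j$, and a switch can occur only at an overlapping pair. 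This global crossing step is the main obstacle. The difficulty is precisely that it cannot be localized: inspecting only the corner $(\xmin(j),\ymin(j))$ does not force an overlap, since the two staircases may skirt each other there and overlap far away (as one sees already for short sequences). Once an overlapping west/south pair is produced, it contradicts feasibility, because distinct rectangles in $\plc{}$ have disjoint interiors; together with the opposite-relations case this shows at most one spatial relation can be forced for $(i,j)$.
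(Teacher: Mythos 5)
Your opening moves are sound: a forced relation does imply the corresponding geometric relation (chaining $\xmax(a_m)\le\xmin(a_{m+1})<\xmax(a_{m+1})$ along the sequence), so two \emph{opposite} relations can never both be forced, and the passage to a minimal counterexample in which the two chains share only $i$ and $j$ is also fine. But the entire content of Lemma~\ref{lowerbound:forced_relation:at_most_one:lemma} lies in the perpendicular case, and there your proposal stops at a hope rather than a proof: you say you ``expect'' a discrete intermediate-value argument to work and you yourself flag the crossing step as the main obstacle. This is a genuine gap, and the intuitions you offer for it do not hold up as stated. First, the chains are not ``monotone staircases'': a west chain is monotone only in $x$, while its $y$-intervals may wander up and down arbitrarily, so there is no monotone frontier to sweep against; a south-chain rectangle can lie west of some west-chain rectangles meeting its $y$-range and east of others. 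Second, consecutive chain rectangles need not touch --- only their transverse interiors overlap, while there may be a positive gap in the chain direction --- so the union of a chain is in general disconnected, and Jordan-curve-style ``they cannot be routed past one another'' reasoning fails locally: a crossing of the two corridors can occur where a gap of one chain meets a gap (or a rectangle) of the other, which produces no overlapping pair of rectangles and hence no contradiction with feasibility. Third, in your side-switch sweep, the ``side'' of $b_{m'}$ and of $b_{m'+1}$ are measured against \emph{different} subsets of the west chain (those west-chain rectangles whose $y$-ranges meet each of them), and one can check that a switch between consecutive south-chain rectangles does not by itself localize an overlapping pair. So the key step remains unproven, and no correct substitute is supplied.

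For comparison, the paper's proof avoids geometry entirely and is a few lines long: take any sequence pair $(\pi,\rho)$ such that $r_{\pi,\rho}$ represents $\plc{}$, which exists by Theorem~\ref{thm_jerrum}. If west is forced for $(i,j)$, then each consecutive pair $(a_m,a_{m+1})$ of the witnessing sequence has west as its \emph{only} relation, so the representation must assign it west, i.e.\ $a_m<_{\pi}a_{m+1}$ and $a_{m+1}<_{\rho}a_m$; by transitivity $i<_{\pi}j$ and $j<_{\rho}i$. Likewise forced south gives $i<_{\pi}j$ and $i<_{\rho}j$, and forced east/north give the reversed patterns. These four order patterns in $(\pi,\rho)$ are mutually exclusive, so at most one relation can be forced. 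Note that this is exactly the localization device your sketch is missing: the total orders $<_{\pi}$ and $<_{\rho}$ convert the chain conditions into transitive order statements, and it would be the natural way to repair your argument.
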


\begin{proof}
Let $(\pi, \rho)$ be a sequence pair
such that $r_{\pi,\rho}$ represents $\plc{}$ (cf.\ Theorem \ref{thm_jerrum}).

First assume that $i$ west of $j$ is forced.
Then, there is a sequence $i = a_1, \ldots, a_k = j$ such that for all $1 \leq m < k$, the only spatial relation of $(a_m, a_{m+1})$ in $\plc{}$ is west.
Since $r_{\pi, \rho}$ represents $\plc{}$, for all $1 \leq m < k$, we have
$a_m <_{\pi} a_{m+1}$ and $a_{m+1} <_{\rho} a_{m}$.
Hence, we have $i <_{\pi} j$ and $j <_{\rho} i$.

Using the same argument, if $i$ south of $j$ is forced, we have $i <_{\pi} j$ and $i <_{\rho} j$, etc.
This shows that at most one spatial relation can be forced for $(i,j)$.
\end{proof}

\begin{definition}
 Let $\plc{}$ be a placement.
 We call $\plc{}$ \stress{forcing} if for all rectangles $i \neq j$, there is a forced spatial relation for $(i, j)$ in $\plc{}$.
\end{definition}

Note that in particular a forcing placement is feasible.
Examples of forcing placements are given in
Figure~\ref{intro::figure::placements::placement2} and
Figure~\ref{lowerbound:forcing_placement:example:figure}.

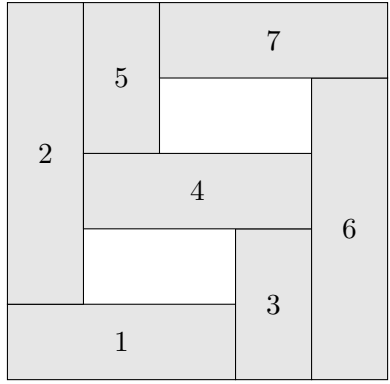
\begin{figure}
\centering
 \begin{tikzpicture}
 \draw (1,1) [fill=gray!20] rectangle (4,2) node[pos=.5] {1};
 \draw (1,2) [fill=gray!20] rectangle (2,6) node[pos=.5] {2};
 \draw (4,1) [fill=gray!20] rectangle (5,3) node[pos=.5] {3};
 \draw (2,3) [fill=gray!20] rectangle (5,4) node[pos=.5] {4};
 \draw (2,4) [fill=gray!20] rectangle (3,6) node[pos=.5] {5};
 \draw (5,1) [fill=gray!20] rectangle (6,5) node[pos=.5] {6};
 \draw (3,5) [fill=gray!20] rectangle (6,6) node[pos=.5] {7};
 \end{tikzpicture}
 \caption{A forcing placement.}
\label{lowerbound:forcing_placement:example:figure}
\end{figure}

\begin{definition}
 Let $\plc{}$ be a forcing placement.
 The \stress{canonical representation} $r_{\plc{}}$ of $\plc{}$ is given by assigning each pair $(i,j)$ to its forced spatial relation.
\end{definition}

Note that by Lemma~\ref{lowerbound:forced_relation:at_most_one:lemma}, the canonical representation is well defined.

\begin{lemma}
\label{lowerbound:canonical_representation:general_placement:lemma}
  Let $n \in \nat$ and let $\plc{}$ be a feasible placement of size $n$.
  Let $r$ be a representation of $\plc{}$ and let $\plc{}^\prime$ be a forcing placement of size $n$ that is represented by $r$.
  Then $\plc{}$ is represented by $r_{\plc{}^\prime}$.
\end{lemma}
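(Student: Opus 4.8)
The plan is to verify the defining property of a representation directly: I must show that for every pair $(i,j)$ with $i\neq j$, the spatial relation $r_{\plc{}^\prime}(i,j)$ holds for $(i,j)$ in $\plc{}$. So I would fix such a pair and set $\singleRel{} := r_{\plc{}^\prime}(i,j)$. By the definition of the canonical representation, $\singleRel{}$ is the forced spatial relation for $(i,j)$ in $\plc{}^\prime$, which is unique by Lemma~\ref{lowerbound:forced_relation:at_most_one:lemma}.

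Since $\singleRel{}$ is forced for $(i,j)$ in $\plc{}^\prime$, there is a sequence $i = a_1, \ldots, a_k = j$ such that for each $1 \le m < k$ the only spatial relation of $(a_m, a_{m+1})$ in $\plc{}^\prime$ is $\singleRel{}$. The crux is to transport this chain from $\plc{}^\prime$ into $\plc{}$ using that the \emph{same} $r$ represents both. First I would use that $r$ represents $\plc{}^\prime$: the value $r(a_m, a_{m+1})$ names a spatial relation that holds for $(a_m, a_{m+1})$ in $\plc{}^\prime$, and since $\singleRel{}$ is the only relation holding there, we must have $r(a_m, a_{m+1}) = \singleRel{}$ for every $m$. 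Then I would use that $r$ also represents $\plc{}$: from $r(a_m, a_{m+1}) = \singleRel{}$ it follows that $\singleRel{}$ holds for $(a_m, a_{m+1})$ in $\plc{}$.

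It remains to chain these one-step relations together. Each of the four spatial relations is transitive, since it is defined by a single coordinate inequality and conditions (i), (ii) in the definition of a placement, namely $\xmin(\cdot) < \xmax(\cdot)$ and $\ymin(\cdot) < \ymax(\cdot)$, supply the strictness needed to telescope (for west, $\xmax(a_1) \le \xmin(a_2) < \xmax(a_2) \le \cdots \le \xmin(a_k)$). Applying transitivity of $\singleRel{}$ along $a_1, \ldots, a_k$ in $\plc{}$ yields that $\singleRel{}$ holds for $(i,j) = (a_1, a_k)$ in $\plc{}$, which is exactly what was required for this pair. As $(i,j)$ was arbitrary, $r_{\plc{}^\prime}$ represents $\plc{}$.

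I expect no substantial obstacle; the two points that need care are the direction of the implication in the definition of a representation and the transitivity in the last step. A representation only certifies the one relation it names, so the key equality $r(a_m, a_{m+1}) = \singleRel{}$ must be derived from the \emph{uniqueness} of the relation holding in $\plc{}^\prime$ rather than from any completeness property of $r$. The chaining step could alternatively be packaged via Observation~\ref{forced::transitivity::observation} by first arguing that $\singleRel{}$ is forced for each $(a_m, a_{m+1})$ in $\plc{}$, but the direct telescoping of coordinate inequalities is the shortest route.
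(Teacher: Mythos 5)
Your proposal is correct and follows essentially the same route as the paper's proof: extract the forcing chain $i = a_1, \ldots, a_k = j$ in $\plc{}^\prime$, use uniqueness of the relation on each link to conclude $r(a_m, a_{m+1}) = r_{\plc{}^\prime}(i,j)$, transfer each link to $\plc{}$ via the fact that $r$ also represents $\plc{}$, and finish by transitivity. The only difference is cosmetic: you spell out the coordinate telescoping that the paper compresses into the phrase ``by transitivity.''
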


\begin{proof}
 Let $i,j \in [n]$ with $i\neq j$.
 There are indices $i = a_1, \ldots, a_k = j$ such that $r_{\plc{}^\prime}(i,j)$ is the only relation of $(a_m, a_{m+1})$ in $\plc{}^\prime$ for all $1 \leq m < k$.
 As $r$ represents $\plc{}^\prime$, we have $r(a_m, a_{m+1}) = r_{\plc{}^\prime}(i,j)$.
 Moreover, $r$ represents $\plc{}$, so $r_{\plc{}^\prime}(i,j)$ is a spatial relation of $(a_m, a_{m+1})$ in $\plc{}$.
 By transitivity, $r_{\plc{}^\prime}(i,j)$ is a spatial relation of $(i,j)$ in $\plc{}$.
 It follows that $r_{\plc{}^\prime}$ represents $\plc{}$.
\end{proof}

\begin{lemma}
 Let $n \in \nat$ and let $C$ be a set of canonical representations of forcing placements of size $n$.
 Furthermore, let $R$ be a complete set of representations for $n$.
 Then there is a complete set of representations $R^\prime$ with $|R^\prime| \leq |R|$ and $C \subseteq R^\prime$.
\end{lemma}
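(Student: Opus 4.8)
The plan is to transform $R$ into $R'$ by replacing, for each $r\in R$, the representation $r$ by a suitable canonical representation whenever this is possible, and then to argue that nothing goes wrong. First I would isolate the crucial property supplied by Lemma~\ref{lowerbound:canonical_representation:general_placement:lemma}: if $r$ represents a forcing placement $\plc{}'$, then $r_{\plc{}'}$ represents \emph{every} placement that $r$ represents. Thus $r_{\plc{}'}$ is at least as powerful as $r$, so substituting it for $r$ can never destroy completeness. The entire difficulty is to perform all these substitutions at once without increasing the cardinality, which requires that each $r\in R$ admit an unambiguous canonical replacement.

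To this end I would first establish the auxiliary fact that if a canonical representation $r_{\plc{}'}$ of a forcing placement $\plc{}'$ also represents a forcing placement $\plc{}''$, then $r_{\plc{}'}=r_{\plc{}''}$. To prove it, fix a pair $(i,j)$ and set $\beta:=r_{\plc{}''}(i,j)$; take a chain $i=a_1,\dots,a_k=j$ witnessing that $\beta$ is forced for $(i,j)$ in $\plc{}''$. Each consecutive pair admits only the relation $\beta$ in $\plc{}''$, so since $r_{\plc{}'}$ represents $\plc{}''$ we must have $r_{\plc{}'}(a_m,a_{m+1})=\beta$; hence $\beta$ is forced for every $(a_m,a_{m+1})$ in $\plc{}'$, and by Observation~\ref{forced::transitivity::observation} $\beta$ is forced for $(i,j)$ in $\plc{}'$, so $r_{\plc{}'}(i,j)=\beta$. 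Combined with Lemma~\ref{lowerbound:canonical_representation:general_placement:lemma}, this shows that if a single $r\in R$ represents two forcing placements whose canonical representations both lie in $C$, then these two canonical representations coincide: applying the Lemma to $r$ and one of the two forcing placements shows that the associated canonical representation represents the other forcing placement, and the auxiliary fact then forces equality.

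With this in hand I would define a map $g\colon R\to R\cup C$ by $g(r):=c$ whenever $r$ represents a forcing placement whose canonical representation $c$ belongs to $C$ (well defined by the previous paragraph), and $g(r):=r$ otherwise, and set $R':=g(R)$. Then $|R'|=|g(R)|\le|R|$ is immediate. To see $C\subseteq R'$, observe that each $c\in C$ is the canonical representation of some forcing placement $\plc{}_c$, and completeness of $R$ yields some $r\in R$ representing $\plc{}_c$, whence $g(r)=c$. Finally, for completeness of $R'$, take any feasible placement $Q$ of size $n$ and some $r\in R$ representing it; if $g(r)=r$ we are done, and otherwise $g(r)=c$ is the canonical representation of a forcing placement $\plc{}'$ represented by $r$, so by Lemma~\ref{lowerbound:canonical_representation:general_placement:lemma} the representation $c$ also represents $Q$, and $c\in R'$.

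The main obstacle is the auxiliary fact establishing that each $r$ has a well-defined canonical target, equivalently that $g$ is single-valued; everything else is bookkeeping. It is precisely here that the definition of the canonical representation via forced relations, together with the transitivity of forced relations (Observation~\ref{forced::transitivity::observation}), is indispensable — without single-valuedness, one $r$ could be asked to map to several distinct canonical representations, which would spoil the bound $|R'|\le|R|$.
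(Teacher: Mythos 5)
Your proof is correct, and while its decisive technical step coincides with the paper's, the overall architecture is genuinely different. The paper argues by an extremal exchange: among all complete sets $R^\prime$ with $|R^\prime| \leq |R|$ it picks one minimizing $|C \setminus R^\prime|$, supposes some $r_\plc{} \in C \setminus R^\prime$ remains, takes $r^\prime \in R^\prime$ representing the corresponding forcing placement $\plc{}$, rules out $r^\prime \in C$ by exactly the chain-plus-transitivity argument you use for your auxiliary fact (phrased there as a contradiction with $r_\plc{} \neq r^\prime$), and then swaps $r^\prime$ for $r_\plc{}$, invoking Lemma~\ref{lowerbound:canonical_representation:general_placement:lemma} to preserve completeness and thereby contradict minimality. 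You instead build $R^\prime$ directly: you isolate the coincidence statement (a canonical representation that represents another forcing placement equals that placement's canonical representation) as a standalone fact, combine it with Lemma~\ref{lowerbound:canonical_representation:general_placement:lemma} to show each $r \in R$ has at most one canonical target in $C$, and take $R^\prime = g(R)$ for the resulting substitution map $g$. Both routes rest on the same two pillars --- forced-relation chains with Observation~\ref{forced::transitivity::observation}, and the fact that a canonical representation of a forcing placement represented by $r$ represents everything $r$ does --- but yours is constructive, performs all replacements simultaneously, and makes the uniqueness of canonical targets explicit, whereas the paper's minimality argument sidesteps well-definedness entirely by only ever performing one swap at a time. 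The cost of your approach is precisely the single-valuedness of $g$, which you correctly identify as the crux and prove correctly; the benefit is an explicit description of $R^\prime$ and a reusable uniqueness statement for canonical representations.
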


\begin{proof}
Let $R^\prime$ be a complete set of representations with $|R^\prime| \leq |R|$ which minimizes $|C \setminus R^\prime|$.
Note that since $R$ is a candidate for $R^\prime$, $R^\prime$ exists.
We show that $C \setminus R^\prime = \emptyset$.

Otherwise, let $r_\plc{} \in C \setminus R^\prime$ be a canonical representation of a forcing placement $\plc{}$ and let $r^\prime \in R^\prime$ be an element representing $\plc{}$.

If $r^\prime \in C$, let $\plc{}^\prime$ be a forcing placement such that $r^\prime$ is the canonical representation of $\plc{}^\prime$.
Since $r_\plc{} \neq r^\prime$, there are indices $i \neq j$ such that $r_\plc{}(i,j) \neq r^\prime(i,j)$.
Since $\plc{}$ is forcing, there are indices $i=a_1, \ldots, a_k = j$ such that for $1 \leq m < k$, the rectangle pair $(a_m, a_{m+1})$ has only the spatial relation $r_\plc{}(i,j)$ in $\plc{}$.
Hence $r^\prime(a_m, a_{m+1}) = r_\plc{}(i,j)$, as $r^\prime$ represents $\plc{}$.
However, $r^\prime$ is the canonical representation of $\plc{}^\prime$, so for $1 \leq m < k$, the relation $r_\plc{}(i,j)$ is forced for $(a_m, a_{m+1})$ in $\plc{}^\prime$.
By Observation~\ref{forced::transitivity::observation}, the relation $r_\plc{}(i,j)$ is also forced for $i,j$ in $\plc{}^\prime$, contradicting $r_\plc{}(i,j) \neq r^\prime(i,j)$.

This means that $r^\prime \notin C$, and let $R^{\prime\prime} := (R^\prime \setminus \{r^\prime\}) \cup \{r_\plc{}\}$.
By Lemma~\ref{lowerbound:canonical_representation:general_placement:lemma}, every feasible placement that is represented by $r^\prime$ is also represented by $r_\plc{}$.
Hence, $R^{\prime\prime}$ is a complete set of representations with $|R^{\prime\prime}| = |R^{\prime}|$, contradicting the choice of $R^\prime$.
\end{proof}

\begin{corollary}
\label{lowerbound:canoncial_size:corollary}
 Let $n \in \nat$ and let $C$ be a set of canonical representations of forcing placements of size $n$.
 Then any complete set of representations has at least $|C|$ elements.
\end{corollary}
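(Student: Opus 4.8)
The plan is to read the corollary directly off the immediately preceding lemma, which already does all the real work; the corollary is just the numerical consequence of the containment it produces. That lemma asserts that for \emph{any} complete set of representations $R$ one can find a complete set $R'$ with $|R'| \le |R|$ that additionally satisfies $C \subseteq R'$. So my strategy is simply to invoke this and then count.

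Concretely, I would fix an arbitrary complete set of representations $R$ for $n$ and apply the preceding lemma to $C$ and $R$. This yields a complete set $R'$ with $|R'| \le |R|$ and $C \subseteq R'$. The containment $C \subseteq R'$ immediately gives $|C| \le |R'|$, and chaining this with $|R'| \le |R|$ produces $|C| \le |R|$. Since $R$ was an arbitrary complete set of representations, this shows that every complete set of representations has at least $|C|$ elements, which is exactly the claim.

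There is essentially no obstacle remaining at this stage: the entire content of the corollary lies in the preceding lemma, whose proof in turn rests on Lemma~\ref{lowerbound:canonical_representation:general_placement:lemma}. The crucial fact established there is that the canonical representation $r_{\plc{}}$ of a forcing placement $\plc{}$ represents every feasible placement that is represented by any other representation of $\plc{}$. Consequently a canonical representation can always be substituted into a complete set in place of some representation covering the same placement, without sacrificing completeness and without increasing cardinality. Once this substitution argument is in hand, the minimality-based choice of $R'$ (minimizing $|C \setminus R'|$) forces $C \subseteq R'$, and the corollary follows purely by the counting step above.
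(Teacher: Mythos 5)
Your proposal is correct and matches the paper's intent exactly: the corollary is stated without an explicit proof precisely because it follows immediately from the preceding lemma via the chain $|C| \le |R'| \le |R|$, which is the argument you give. Your closing remarks about the substitution mechanism inside the lemma are accurate but not needed for the corollary itself.
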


\subsection{Many canonical representations}

Now we get to the main part of the proof:
we show the existence of a large set of canonical representations.
Set $r_{\pi}:=r_{\text{id},\pi}$.

 \begin{lemma}
 \label{lowerbound:forced_pi_condition:lemma}
  Let $\pi$ be a permutation on $[n]$ and let $\plc{}$ be a feasible placement of size $n$.
  Then $\plc{}$ is a forcing placement with $r_\plc{} = r_{\pi}$ iff
  \begin{enumerate}[(i)]
   \item for all $(i,j) \in A(G_{\pi})$, $i$ is only south of $j$ in $\plc{}$, \label{lowerbound:forced_pi_condition:lemma:case1i}
   \item for all $(i,j) \in A(G_{-\pi})$, $i$ is only west of $j$  in $\plc{}$. \label{lowerbound:forced_pi_condition:lemma:case2i}
  \end{enumerate}
 \end{lemma}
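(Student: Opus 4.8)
The plan is to prove both implications directly, translating everything into the reachability structure of $G_\pi$ and $G_{-\pi}$. First I would record the dictionary between $r_\pi$ and these graphs. Since $r_\pi = r_{\text{id},\pi}$, one has $r_\pi(i,j)=\south$ exactly when $i<j$ and $\pi(i)<\pi(j)$, which by Observation~\ref{perm:graph:reachability:observation} is equivalent to $j$ being reachable from $i$ in $G_\pi$; and $r_\pi(i,j)=\west$ exactly when $i<j$ and $\pi(j)<\pi(i)$, which by Observations~\ref{perm:graph:reachability:observation} and~\ref{lowerbound:minuspi:reachability:observation} is equivalent to $j$ being reachable from $i$ in $G_{-\pi}$. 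I would also note the self-evident symmetry that $a$ is only $\south$ of $b$ iff $b$ is only $\north$ of $a$ (and likewise only $\west$/only $\east$), so that a forcing chain for one relation reverses to a forcing chain for the opposite relation.

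For the direction assuming (i) and (ii), I would show that $r_\pi(i,j)$ is forced for every pair $i\ne j$, which by Lemma~\ref{lowerbound:forced_relation:at_most_one:lemma} then gives both that $\plc{}$ is forcing and that $r_{\plc{}}=r_\pi$. If $r_\pi(i,j)=\south$, then $j$ is reachable from $i$ in $G_\pi$, so there is a directed path $i=v_0,\dots,v_k=j$; by (i) each arc $(v_m,v_{m+1})$ is only $\south$, so this path is exactly a chain witnessing that $\south$ is forced for $(i,j)$. The case $r_\pi(i,j)=\west$ is identical, using a path in $G_{-\pi}$ and (ii). The two remaining cases reduce to these: if $r_\pi(i,j)=\north$ then $r_\pi(j,i)=\south$, and if $r_\pi(i,j)=\east$ then $r_\pi(j,i)=\west$, so applying the previous cases to $(j,i)$ and reversing the chain via the symmetry above forces the correct relation for $(i,j)$.

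The more delicate direction assumes $r_{\plc{}}=r_\pi$ (so $\plc{}$ is forcing) and must upgrade ``$\south$ is forced'' to ``only $\south$ holds'' for every arc of $G_\pi$ --- and this is where the minimality in Definition~\ref{perm:graph} is essential. Let $(i,j)\in A(G_\pi)$. Since $r_{\plc{}}(i,j)=r_\pi(i,j)=\south$, the relation $\south$ is forced for $(i,j)$, so there is a chain $i=a_1,\dots,a_k=j$ in which each consecutive pair is only $\south$. For each such pair, only $\south$ holds, hence $\south$ is its forced relation and $r_{\plc{}}(a_m,a_{m+1})=\south$; because $r_{\plc{}}=r_\pi$ this yields $a_m<a_{m+1}$ and $\pi(a_m)<\pi(a_{m+1})$. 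Thus along the chain both the indices and the $\pi$-values strictly increase. If $k\ge 3$, the intermediate element $a_2$ would satisfy $i<a_2<j$ and $\pi(i)<\pi(a_2)<\pi(j)$, contradicting the minimality condition that makes $(i,j)$ an arc of $G_\pi$. Hence $k=2$ and the chain is the single pair $(i,j)$, which is therefore only $\south$; this is (i). Condition (ii) follows by the analogous argument with $G_{-\pi}$, the relation $\west$, and strictly decreasing $\pi$-values.

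I expect the main obstacle to be exactly the step just described: recognizing that, under $r_{\plc{}}=r_\pi$, any forcing chain for an arc must collapse to the arc itself, because its interior vertices would violate the covering (minimality) property in the definition of $G_\pi$. The remaining work --- the reachability dictionary and the reversal symmetry for $\north$ and $\east$ --- is routine given Observations~\ref{perm:graph:reachability:observation} and~\ref{lowerbound:minuspi:reachability:observation} and Lemma~\ref{lowerbound:forced_relation:at_most_one:lemma}.
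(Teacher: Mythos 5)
Your proof is correct and takes essentially the same approach as the paper's: the forward direction decomposes each pair by reachability in $G_\pi$ versus $G_{-\pi}$ and concatenates arcs into forcing chains, and the reverse direction collapses any forcing chain for an arc to the arc itself using the minimality condition in Definition~\ref{perm:graph}. The only cosmetic differences are that you spell out the reachability dictionary and the north/east symmetry explicitly, which the paper leaves implicit.
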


 \begin{proof}
  First, we prove that if (\ref{lowerbound:forced_pi_condition:lemma:case1i})
  and (\ref{lowerbound:forced_pi_condition:lemma:case2i}), $\plc{}$ is forcing with $r_\plc{} = r_{\pi}$.
  Let $i,j \in [n]$ with $i<j$.
  By Observation~\ref{lowerbound:minuspi:reachability:observation},
  $j$ is reachable from $i$ in either $G_\pi$ or $G_{-\pi}$, but not both.
  Assume $j$ is reachable from $i$ in $G_{\pi}$.
  Then there is a sequence of vertices $i = a_1, \ldots, a_k = j$ with $(a_m, a_{m+1}) \in A(G_{\pi})$ for $1 \leq m < k$,
  so by (\ref{lowerbound:forced_pi_condition:lemma:case1i}), $i$ south of $j$ is forced.
  Furthermore, since $j$ is reachable from $i$ in $G_{\pi}$, we have $i <_{\pi} j$, so $r_{\pi}(i,j) = \south$.
  The case that $j$ is reachable from $i$ in $G_{-\pi}$ is proven analogously.

  For the other direction, let $\plc{}$ be forcing with $r_\plc{} = r_{\pi}$ and $(i,j) \in A(G_{\pi})$.
  Since $i < j$ and $i <_{\pi} j$, we have $r_\plc{}(i,j) = r_{\pi}(i,j) = \south$, so $i$ is south of $j$.
  It remains to be shown that south is the only spatial relation of $(i,j)$.
  Since $r_\plc{}(i,j) = \south$, $i$ south of $j$ is forced, and there are indices
  $i = a_1, \ldots, a_k = j$ such that $a_m$ is only south of  $a_{m+1}$ in $\plc{}$ for $1 \leq m < k$.
  Since $r_{\pi} = r_\plc{}$ represents $\plc{}$, we have $r_{\pi}(a_m, a_{m+1}) = \south$ for $1 \leq m < k$, so
  $i = a_1 < \cdots < a_k = j$ and $i = a_1 <_{\pi} \cdots <_{\pi} a_k = j$.
  Hence, due to $(i,j) \in A(G_{\pi})$, we have $k=2$, and thus $i$ is only south of $j$.

  Again, the case $(i,j) \in A(G_{-\pi})$ is proven analogously.
 \end{proof}

 Before we prove the main lemma, we need a technical result:

 \begin{lemma}
 \label{lowerbound:j_i_relation_helper:lemma}
  Let $\pi$ be a biplane permutation on $[n]$ with $\wrap{\pi}(n-1) < \wrap{\pi}(n) < n$.
  Let $(j,n) \in A(G_{-\pi})$  such that $j$ has no outgoing edges in $G_{\pi}$ and
  let $i < j$ with $(i,n) \in A(G_{\pi})$.
  Furthermore, let $\plc{}=\placement$ be a forcing placement with $r_\plc{} = r_{\pi}$.
  Then, $i$ is the only index with this property,
  and there is a forcing placement $\plc{}^\prime=\placement[^\prime]$ with
  $r_{\plc{}^\prime} = r_{\pi}$ and $\xmax^\prime(j) < \xmax^\prime(i)$.
 \end{lemma}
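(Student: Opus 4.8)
The plan is to establish the two assertions independently. The uniqueness of $i$ I would deduce purely combinatorially from the fact that a biplane permutation is in particular plane, i.e.\ avoids $21\bar 3 54$; the existence of $\plc^\prime$ I would then get for free by taking $\plc^\prime=\plc$, since the required inequality is already forced by the two arc memberships.

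For uniqueness, suppose for contradiction that there were two distinct indices below $j$, say $i_1<i_2<j$, with $(i_1,n),(i_2,n)\in A(G_{\pi})$. From the two arcs I read off $\pi(i_1)<\pi(n)$ and $\pi(i_2)<\pi(n)$. Applying condition (ii) in the definition of $A(G_{\pi})$ to the arc $(i_1,n)$ with the candidate $k=i_2\in(i_1,n)$ rules out $\pi(i_1)<\pi(i_2)<\pi(n)$, so together with $\pi(i_2)<\pi(n)$ I am forced into $\pi(i_2)<\pi(i_1)$. Since $(j,n)\in A(G_{-\pi})$ gives $j<n$ and $\pi(n)<\pi(j)$, the four positions $i_1<i_2<j<n$ now carry the value order $\pi(i_2)<\pi(i_1)<\pi(n)<\pi(j)$, which is exactly the value order required by $21\bar3 54$ (taking the pattern's indices to be $i_1,i_2,j,n$). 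To realise the forbidden pattern it remains to verify the emptiness ($\bar3$) condition, namely that there is no $k$ with $i_2<k<j$ and $\pi(i_1)<\pi(k)<\pi(n)$; but this is precisely condition (ii) of the arc $(i_1,n)$ restricted to the subinterval $(i_2,j)\subseteq(i_1,n)$. Hence $\pi$ would contain $21\bar3 54$, contradicting that $\pi$ is plane, and so $i$ is unique.

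For the placement, I would simply set $\plc^\prime:=\plc$. Because $\plc$ is forcing with $r_{\plc}=r_{\pi}$, Lemma~\ref{lowerbound:forced_pi_condition:lemma} guarantees that $i$ is \emph{only} south of $n$ and that $j$ is \emph{only} west of $n$. The former means $i$ is not west of $n$, hence $\xmin(n)<\xmax(i)$; the latter means $\xmax(j)\le\xmin(n)$. Chaining these, $\xmax^\prime(j)=\xmax(j)\le\xmin(n)<\xmax(i)=\xmax^\prime(i)$, as desired. In fact this shows the inequality holds in \emph{every} forcing placement realising $r_{\pi}$, so no modification of $\plc$ is actually needed.

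The only genuine obstacle is the uniqueness step, and within it the key move is to recognise the configuration as $21\bar3 54$ and to supply the emptiness condition from the arc-minimality of $(i_1,n)$; everything else is bookkeeping of inequalities. I note that the hypotheses $\pi(n-1)<\pi(n)<n$ and that $j$ has no outgoing edge in $G_{\pi}$ do not seem to enter either part of the argument above, so I would expect them to be used only to guarantee, in the surrounding construction, that indices of the stated form exist and behave as needed in the next step.
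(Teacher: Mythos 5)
Your uniqueness argument is correct, and it is essentially the paper's own: the paper also rules out a second predecessor $l<j$ of $n$ in $G_{\pi}$ by exhibiting the pattern $21\bar 3 54$ on four indices ending in $j,n$, with the emptiness condition supplied by condition (ii) of the arc $(i,n)$ (parts (A)--(C) of its Claim). Your two-predecessor packaging is a clean streamlining of that idea.

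The second assertion is where the proposal breaks down. Your chain $\xmax(j)\le\xmin(n)<\xmax(i)$ is valid only if rectangle $n$ is actually present in the placement, i.e., only under the literal reading that $r_{\plc{}}=r_{\pi}$ forces $\plc{}$ to have size $n$ --- and under that reading the conclusion is vacuous, which should itself have been a warning sign. That reading is incompatible with how the lemma is used. In the proof of Lemma~\ref{lowerbound:placement:lemma}, this lemma is invoked with its ``$n$'' playing the role of $n+1$, while the placement fed into it is the inductively obtained $\plc{}^\prime$ of size $n$ realizing $r_{\pi^\prime}$ for the \emph{restricted} permutation: the rectangle in the role of ``$n$'' has not been placed yet, since placing it (with $\xmin(n+1):=\xmax(j)$, which needs $\xmax(j)<\xmax(i)$ so that $i$ does not end up west of $n+1$) is the very purpose of the surrounding construction. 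In that setting there are no coordinates for rectangle $n$, Lemma~\ref{lowerbound:forced_pi_condition:lemma} cannot be applied to the pairs $(i,n)$ and $(j,n)$, and the inequality $\xmax(j)<\xmax(i)$ can simply fail, so the placement must be modified.

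That modification is the actual content of the paper's proof, and it is entirely absent from yours: one sets $\xmax^\prime(j):=\frac{\max\{\xmin(i),\xmin(j)\}+\xmax(i)}{2}$ and verifies that the result is still a forcing placement with the same canonical representation. The verification is exactly where the hypotheses you flagged as ``unused'' enter --- contrary to your guess, they are needed inside this lemma, not only in the surrounding induction. The assumption $\pi(n-1)<\pi(n)<n$ together with avoidance of $45\bar 3 12$ (planarity of $-\pi$) yields $(i,j)\in A(G_{\pi})$ and that $i$ is the \emph{only} predecessor of $j$ in $G_{\pi}$, so that after shrinking $j$ only the pair $(i,j)$ could be damaged, and the choice of $\xmax^\prime(j)$ guarantees it is not; and ``$j$ has no outgoing arcs in $G_{\pi}$'' ensures no other arc of $G_{\pi}$ needs re-checking. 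Whether one blames the statement's wording or its application for the size mismatch, the proof the paper needs is this construction, and your proposal does not contain it: for the second assertion there is a genuine gap.
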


 \begin{proof}

    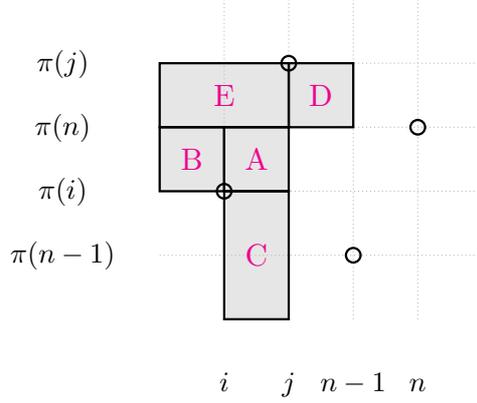
\begin{figure}
      \centering
      \resizebox{0.45\textwidth}{!}{%
      \begin{tikzpicture}[scale=0.8]
         \drawPermutation{i,j,n-1,n}{n-1,i,n,j}
         \fill [\forbiddencolor] (min |- i) rectangle (j);
         \fill [\forbiddencolor] (i |- min) rectangle (j);
         \fill [\forbiddencolor] (j |- n) rectangle (n-1 |- j);
          \begin{pgfonlayer}{fg}
           \draw [thick] (min |- i) rectangle (i |- n) node[pos=.5] {\ref{lowerbound:j_i_relation_helper:lemma:claim:case1}};
           \draw [thick] (i |- min) rectangle (j |- i) node[pos=.5] {\ref{lowerbound:j_i_relation_helper:lemma:claim:case2}};
           \draw [thick] (i) rectangle (j |- n) node[pos=.5] {\ref{lowerbound:j_i_relation_helper:lemma:claim:case3}};
           \draw [thick] (min |- n) rectangle (j) node[pos=.5] {\ref{lowerbound:j_i_relation_helper:lemma:claim:case4}};
           \draw [thick] (j |- n) rectangle (n-1 |- j) node[pos=.5] {\ref{lowerbound:j_i_relation_helper:lemma:claim:case5}};
          \end{pgfonlayer}
      \end{tikzpicture}
      }
      \caption{Configuration with $i < j < n-1 < n$. Gray areas are claimed to be empty.}
      \label{lowerbound:j_i_relation_helper:lemma:figure}
 \end{figure}

First, note that $j \neq n-1$, so $j < n-1$, and due to $i < j < n-1 < n$ and $(n-1,n),(i,n) \in A(G_{\pi})$, we must have
$$n-1 <_{\pi} i <_{\pi} n <_{\pi} j.$$

\noindent
{\bf Claim:}
There is no $l \in [n]$ with either
\begin{enumerate}[\hspace{0.5cm}(A)]
 \item $i < l < j$ and $i <_{\pi} l <_{\pi} n$, or \label{lowerbound:j_i_relation_helper:lemma:claim:case3}
 \item $l < i$ and $i <_{\pi} l <_{\pi} n$, or \label{lowerbound:j_i_relation_helper:lemma:claim:case1}
 \item $i < l < j$ and $l <_{\pi} i$, or \label{lowerbound:j_i_relation_helper:lemma:claim:case2}
 \item $j < l < n-1$ and $n <_{\pi} l <_{\pi} j$, or \label{lowerbound:j_i_relation_helper:lemma:claim:case5}
 \item $l < j$ and $n <_{\pi} l <_{\pi} j$. \label{lowerbound:j_i_relation_helper:lemma:claim:case4}
\end{enumerate}
Figure~\ref{lowerbound:j_i_relation_helper:lemma:figure} illustrates the setting and the five statements.

To prove the Claim,
first observe that an $l$ with (\ref{lowerbound:j_i_relation_helper:lemma:claim:case3}) would contradict $(i,n) \in A(G_{\pi})$.
Next, this implies that an $l$ with (\ref{lowerbound:j_i_relation_helper:lemma:claim:case1})
or with (\ref{lowerbound:j_i_relation_helper:lemma:claim:case2}) would yield (with $i$, $j$ and $n$) the pattern $21\bar 354$, contradicting that $\pi$ is plane.
Third, an $l$ with (\ref{lowerbound:j_i_relation_helper:lemma:claim:case5}) would contradict $(j,n) \in A(G_{-\pi})$.
Finally, this implies that an $l$ with (\ref{lowerbound:j_i_relation_helper:lemma:claim:case4}) would
yield (together with $j$, $n-1$ and $n$) the pattern $45\bar 312$, contradicting that $-\pi$ is plane.
The Claim is proved.

Now, by \eqref{lowerbound:j_i_relation_helper:lemma:claim:case3}, \eqref{lowerbound:j_i_relation_helper:lemma:claim:case1}, and \eqref{lowerbound:j_i_relation_helper:lemma:claim:case2} of the Claim,
there is no $l<j$ with $l\not=i$ and $(l,n)\in A(G_{\pi})$.

Part \eqref{lowerbound:j_i_relation_helper:lemma:claim:case3} and \eqref{lowerbound:j_i_relation_helper:lemma:claim:case4} of the Claim imply that $(i,j) \in A(G_{\pi})$.
Hence, by Lemma \ref{lowerbound:forced_pi_condition:lemma}, $i$ is only south of $j$ in $P$ -- in particular $i$ is not west of $j$ -- so $\xmax(i) > \xmin(j)$.
If $\xmax(j) < \xmax(i)$, there is nothing to show (i.e., set $\plc{}^\prime = \plc{}$), so assume $\xmax(j) \geq \xmax(i)$.

Set $(\xmin^\prime,\ymin^\prime, \xmax^\prime,\ymax^\prime) = (\xmin,\ymin, \xmax,\ymax)$, except for
 $$\xmax^\prime(j) := \frac{\max\{\xmin(i),\xmin(j)\} + \xmax(i)}{2}.$$
 Then
$$\xmax^\prime(j)  < \xmax(i) \leq \xmax(j).$$
 Moreover,
 $$\xmax^\prime(j) \geq \frac{\xmin^\prime(j) + \xmax^\prime(i)}{2} > \xmin^\prime(j).$$
Hence, $\plc{}^\prime$ is still a placement.
Since we only decreased the width of $j$, all only-west and only-east relations of $j$ are still intact.
Moreover, as $j$ has no outgoing edges in $G_{\pi}$,
in order to see that $\plc{}^\prime$ is still forcing with $r_{\plc{}^\prime} = r_{\pi}$,
we only need to verify that for all edges $(k,j) \in A(G_{\pi})$, $k$ is still only south of $j$.
But, by \eqref{lowerbound:j_i_relation_helper:lemma:claim:case3}, \eqref{lowerbound:j_i_relation_helper:lemma:claim:case1}, \eqref{lowerbound:j_i_relation_helper:lemma:claim:case2}
and \eqref{lowerbound:j_i_relation_helper:lemma:claim:case4} of the Claim,
$i$ is the only predecessor of $j$ in $G_{\pi}$,
and since we only reduced $\xmax^\prime(j)$, $j$ is still not east of $i$.
Moreover, we have
$$ \xmax^\prime(j) = \frac{\max\{\xmin^\prime(i),\xmin^\prime(j)\} + \xmax^\prime(i)}{2} \geq \frac{\xmin^\prime(i) + \xmax^\prime(i)}{2} > \xmin^\prime(i).$$
so $j$ is not west of $i$ in $\plc{}^\prime$.
 \end{proof}

\begin{lemma}
\label{lowerbound:placement:lemma}
 Let $\pi$ be a biplane permutation on $[n]$.
 Then there is a forcing placement $\plc{}$ of size $n$ with $r_{\plc{}} = r_{\pi}$.
\end{lemma}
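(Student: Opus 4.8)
The plan is to prove the statement by induction on $n$, constructing the forcing placement for $\pi$ by deleting the rectangle of largest index, applying the induction hypothesis, and reinserting it. It is convenient to first translate the goal into the language of the two digraphs. By definition of $r_{\pi}=r_{\text{id},\pi}$, for $i<j$ we have $r_{\pi}(i,j)=\south$ exactly when $\pi(i)<\pi(j)$ and $r_{\pi}(i,j)=\west$ otherwise; by Observations \ref{perm:graph:reachability:observation} and \ref{lowerbound:minuspi:reachability:observation} these two cases correspond to $j$ being reachable from $i$ in $G_{\pi}$ and in $G_{-\pi}$, respectively. Hence, by Lemma \ref{lowerbound:forced_pi_condition:lemma}, it suffices to build a placement in which every arc $(i,j)\in A(G_{\pi})$ makes $i$ only south of $j$ (that is, $\ymax(i)\le\ymin(j)$ while the $x$-intervals of $i$ and $j$ overlap) and every arc $(i,j)\in A(G_{-\pi})$ makes $i$ only west of $j$ ($\xmax(i)\le\xmin(j)$ while their $y$-intervals overlap).

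First I would dispose of the extreme positions of the largest index $n$. If $\pi(n)=n$, then $r_{\pi}(i,n)=\south$ for all $i<n$ and $n$ has no in-arc in $G_{-\pi}$, so placing rectangle $n$ above everything and spanning the whole horizontal extent realizes each arc into $n$ as an only-south relation without disturbing the relations among $1,\dots,n-1$. The case $\pi(n)=1$ is symmetric. In the genuine case $1<\pi(n)<n$ I would normalize: since the biplane class is closed under $\pi\mapsto-\pi$ (which keeps index $n$ fixed, preserves $1<\pi(n)<n$, and flips the sign of $\pi(n-1)-\pi(n)$), and since transposing a placement exchanges the two coordinate axes and hence turns a forcing placement for $-\pi$ into one for $\pi$, I may assume the hypothesis $\pi(n-1)<\pi(n)<n$ of Lemma \ref{lowerbound:j_i_relation_helper:lemma}.

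In this normalized case I would delete index $n$; flattening the remaining values yields a biplane permutation $\pi'$ on $[n-1]$ (avoidance of $21\bar{3}54$ and $45\bar{3}12$ is hereditary), and since no deleted position lies strictly between two surviving ones, the arcs of $G_{\pi}$ and $G_{-\pi}$ among $1,\dots,n-1$ are exactly those of $G_{\pi'}$ and $G_{-\pi'}$. The induction hypothesis then supplies a forcing placement for the rectangles $1,\dots,n-1$, and the only remaining task is to add rectangle $n$ so that its in-arcs become only-south (for the $G_{\pi}$-predecessors) and only-west (for the $G_{-\pi}$-predecessors). This is exactly where Lemma \ref{lowerbound:j_i_relation_helper:lemma} enters: taking $j$ to be the $G_{-\pi}$-predecessor of $n$ that is a sink of $G_{\pi}$ (so its top edge reaches the ceiling) and $i$ the unique $G_{\pi}$-predecessor of $n$ with $i<j$ that the lemma guarantees, it produces a placement in which $\xmax(i)$ protrudes to the right of $\xmax(j)$. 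This opens an empty upper-right corner that is simultaneously above $i$ with horizontal overlap (because $i$ protrudes) and to the right of $j$ with vertical overlap (because $j$ reaches the top).

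I would then slot rectangle $n$ into this corner, choosing its bottom edge just above the tops of all its $G_{\pi}$-predecessors and its left edge just to the right of the right edges of all its $G_{-\pi}$-predecessors, with horizontal and vertical extents large enough to overlap each of these predecessors in the required direction but small enough to lie strictly above and to the right of every other rectangle. Verifying that the outcome is a forcing placement with $r_{\plc{}}=r_{\pi}$ then reduces, via Lemma \ref{lowerbound:forced_pi_condition:lemma}, to the only-south and only-west conditions for the arcs incident to $n$, all other arcs being inherited from the smaller placement. The main obstacle is precisely this insertion: one must show that a single rectangle can be made only-south of all its $G_{\pi}$-predecessors and only-west of all its $G_{-\pi}$-predecessors at once, without generating any spurious spatial relation. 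This is the real role of the biplane hypothesis, encapsulated in the Claim inside Lemma \ref{lowerbound:j_i_relation_helper:lemma}: the two forbidden patterns $21\bar{3}54$ and $45\bar{3}12$ are exactly what force the predecessors of $n$ into the nested layout for which such a corner exists and the protrusion can be arranged, and the delicate part is to check that matching the extents of rectangle $n$ to these predecessors creates no unintended overlap.
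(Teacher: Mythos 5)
Your plan retraces the paper's proof: the same induction deleting the largest index, the same normalization via $\pi\mapsto-\pi$ together with swapping the coordinate axes to reach the hypothesis $\pi(n-1)<\pi(n)<n$ of Lemma~\ref{lowerbound:j_i_relation_helper:lemma}, and the same corner insertion verified through Lemma~\ref{lowerbound:forced_pi_condition:lemma}. But it asserts, rather than proves, the structural facts on which everything rests. You write ``taking $j$ to be the $G_{-\pi}$-predecessor of $n$ that is a sink of $G_{\pi}$'' as if such a $j$ obviously exists and is unique. This is precisely where the second forbidden pattern is consumed: the paper takes $j$ \emph{maximum} with $(j,n)\in A(G_{-\pi})$, shows that any second predecessor $l<j$ would create the pattern $45\bar{3}12$ on $(l,j,n-1,n)$ (the middle box being empty because $(l,n)$ is an arc of $G_{-\pi}$), and only then deduces the sink property: an arc $(j,l)\in A(G_{\pi})$ would make $n$ reachable from $l$ in $G_{-\pi}$ and hence produce a second $G_{-\pi}$-predecessor. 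Uniqueness of $j$ is not cosmetic. Your inserted rectangle is north of every rectangle except $j$, so a second $G_{-\pi}$-predecessor $j'$ could never receive its required only-west relation, and the appeal to Lemma~\ref{lowerbound:forced_pi_condition:lemma} would fail; moreover, without the sink property you are not entitled to invoke Lemma~\ref{lowerbound:j_i_relation_helper:lemma} at all, since it is one of that lemma's hypotheses.

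The second gap is the verification you explicitly defer as ``the delicate part.'' It does go through, but it needs an argument: for a $G_{\pi}$-predecessor $k$ of $n$ with $k>j$ one has $\pi(k)<\pi(n)<\pi(j)$ and $j<k$, so $j$ is \emph{forced} west of $k$ in the inherited placement, giving $\xmin(k)\ge \xmax(j)=\xmin(n)$ and hence the needed horizontal overlap; for the (unique) predecessor $i<j$, the overlap is exactly the protrusion $\xmax(i)>\xmax(j)$ supplied by Lemma~\ref{lowerbound:j_i_relation_helper:lemma}; and feasibility of the new rectangle ($\xmax(n)>\xmin(n)$) uses that $j$ is west of $n-1$. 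You should also treat the subcase in which $n$ has no $G_{\pi}$-predecessor below $j$ (then Lemma~\ref{lowerbound:j_i_relation_helper:lemma} is simply not needed). Finally, a small but real inaccuracy: your parenthetical claim that avoidance of $21\bar{3}54$ and $45\bar{3}12$ is hereditary is false for barred patterns under arbitrary deletions --- for instance $21354$ is plane, yet deleting its middle entry leaves $2143$, which is not. It is true when deleting the \emph{last} position, for exactly the reason your next clause gives (no deleted position can serve as the barred witness), so your proof is fine, but the justification should be that one, not heredity.
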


\begin{proof}
 We prove the lemma by induction. The case $n=1$ is trivial, so assume the claim holds for $n \in \nat$ and
 let $\pi$ be a biplane permutation on $[n+1]$.

 First, we consider the case $n <_{\pi} n+1$.
 The other case will later be reduced to this case.
 Let $\pi^\prime$ be the permutation on $[n]$ given by $\pi^\prime(i) := \pi(i)$ if $i <_{\pi} n+1$, and $\pi^\prime(i) := \pi(i) - 1$ otherwise.
 Clearly, for $i,j \in [n]$, we have $i <_{\pi} j \iff i <_{\pi^\prime} j$.
 In particular, $\pi^\prime$ is a biplane permutation, so by the induction hypothesis,
 there is a forcing placement $\plc{}^\prime=\placement[^\prime]$ with $r_{\plc{}^\prime} = r_{\pi^\prime}$.
 Note that $G_{\pi^\prime}$ is an induced subgraph of $G_{\pi}$, and $G_{-\pi^\prime}$ is an induced subgraph of $G_{-\pi}$.
 This means that if we extend $\plc{}^\prime$ to some placement $\plc{}$ of size $n+1$,
 we only need to check edges incident to $n+1$ when applying
 Lemma~\ref{lowerbound:forced_pi_condition:lemma}.

 If $\pi(n+1) = n+1$, then we can just place $n+1$ north of all other rectangles:
 extend $\plc{}^\prime = \placement[^\prime]$ to $\plc{} = \placement$ by
 \begin{align*}
  \xmin(n+1) & := \min_{i \in [n]}\xmin^\prime(i), & \ymin(n+1) & := \max_{i \in [n]}\ymax^\prime(i), \\
  \xmax(n+1) & := \max_{i \in [n]}\xmax^\prime(i), & \ymax(n+1) & := \max_{i \in [n]}\ymax^\prime(i) + 1.
 \end{align*}
 By \stress{extending}, we mean that $\plc{}$ and $\plc{}^\prime$ agree for $i=1,\ldots,n$.
 Then, $n+1$ does not overlap with any rectangle, so $\plc{}$ is a feasible placement.
 For $(i, n+1) \in A(G_{\pi})$, by the construction of $\plc{}$, we have that $i$ is only south of $n+1$ in $\plc{}$.
 Since there are no edges $(i, n+1) \in A(G_{-\pi})$,
 we can apply Lemma~\ref{lowerbound:forced_pi_condition:lemma} to conclude that $\plc{}$ is forcing with $r_\plc{} = r_{\pi}$.

 \begin{figure}
   \centering
   \providecommand{\subfigwidth}{0.47\textwidth}
   \begin{subfigure}[b]{\subfigwidth}
      \centering
      \resizebox{0.95\textwidth}{!}{%
      \begin{tikzpicture}[scale=0.8]
         \drawPermutation{j,n,n+1}{n,n+1,j}
         \fill [\forbiddencolor] (j) rectangle (max);
         \fill [\forbiddencolor] (min |- n+1) rectangle (max |- j);
         \fill [\forbiddencolor] (n |- min) rectangle (max);
      \end{tikzpicture}
      }
      \caption{Situation with $j < n < n+1$ and  \\ $n <_{\pi} n+1 <_{\pi} j$.}
      \label{lowerbound:placement:proof:fig1:case1}
   \end{subfigure}
   \hfill 
   \begin{subfigure}[b]{\subfigwidth}
      \centering
      \resizebox{0.95\textwidth}{!}{%
      \begin{tikzpicture}[scale=0.8]
         \drawPermutation{l,j,n,n+1}{n,n+1,l,j}
         \fill [\forbiddencolor] (j) rectangle (n+1);
      \end{tikzpicture}
      }
      \caption{If there exists a predecessor $l < j$ of $n+1$ in $G_{-\pi}$, then $-\pi$ is not plane.}
      \label{lowerbound:placement:proof:fig1:jonlypred}
   \end{subfigure}
   \caption{Illustrations of orderings of elements in $\pi$. Gray areas do not contain any other elements.}
   \label{lowerbound:placement:proof:fig1}
 \end{figure}
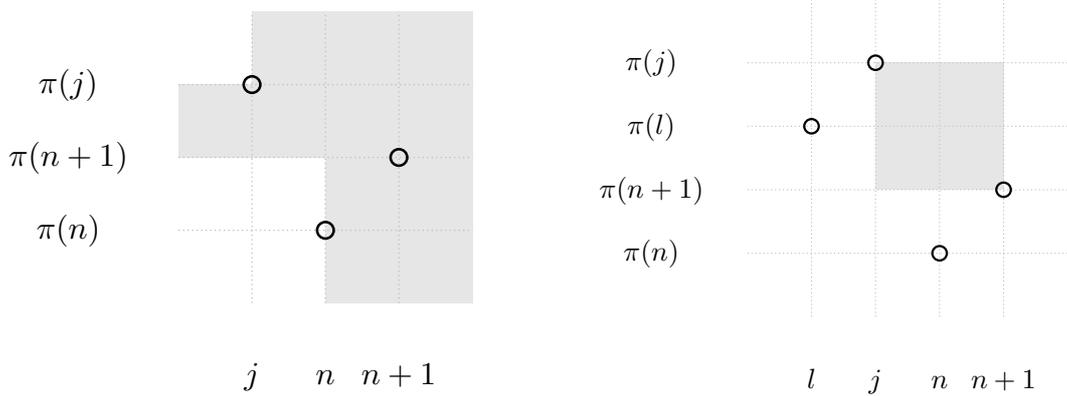

 So assume $\pi(n+1) < n+1$. Let $j$ be maximum with $(j, n+1) \in A(G_{-\pi})$. Note that $j$ exists since
 $n+1$ is reachable from $\pi^{-1}(n+1)$ in $G_{-\pi}$.
 This configuration is illustrated in Figure~\ref{lowerbound:placement:proof:fig1:case1}.
 Then $j$ is the only predecessor of $n+1$ in $G_{-\pi}$: if $l < j$ with $n+1 <_{\pi} l <_{\pi} j$,
 then $(l, j, n, n+1)$ shows that $-\pi$ is not plane, cf.\ Figure~\ref{lowerbound:placement:proof:fig1:jonlypred}.

 Moreover, $j$ has no outgoing edges in $G_{\pi^\prime}$, since if $(j,l) \in A(G_{\pi^\prime})$,
 then $l < n+1$ and $n+1 <_{\pi} l$, so $n+1$ is reachable from $l$ in $G_{-\pi}$, contradicting that
 $j$ is the only predecessor of $n+1$ in $G_{-\pi}$.
 Hence,
 there is no rectangle only north of $j$ in $\plc{}^\prime$,
 and w.l.o.g., we can assume that
 \begin{align} \label{lowerbound:placement:proof:ymaxj}
 \ymax^\prime(j) \geq \max\{1 + \ymax^\prime(i) \;:\; i \in [n] \setminus \{j \}\},
 \end{align}
 since we can increase the height of $j$ as required.
 Increasing the size of rectangles while maintaining a feasible placement does not destroy forced relations,
 so $\plc{}^\prime$ is still forcing with $r_{\plc{}^\prime} = r_{\pi^\prime}$.

 Now, we consider the predecessors of $n+1$ in $G_{\pi}$, which represent the rectangles that $n+1$ has to be north of.
 Let $i$ be minimum with $(i,n+1) \in A(G_{\pi})$.
 Again, $i$ exists since $(n,n+1) \in A(G_{\pi})$.
 If $i < j$, by Lemma~\ref{lowerbound:j_i_relation_helper:lemma}, there is no $i < l < j$ with $(l,n+1) \in A(G_{\pi})$
 and w.l.o.g. we can assume that $\xmax^\prime(j) < \xmax^\prime(i)$.
 Note that (\ref{lowerbound:placement:proof:ymaxj}) can still be assumed.

 We extend $\plc{}^\prime = \placement[^\prime]$ to $\plc{} = \placement$ by
 \begin{align*}
  \xmin(n+1) & := \xmax(j), & \ymin(n+1) & := \ymax(j) - 1, \\
  \xmax(n+1) & := \max_{l \in [n]}\xmax(l), & \ymax(n+1) & := \ymax(j).
 \end{align*}

 First, since $j$ is west of $n$, we have $\xmax(n+1) \ge \xmax(n) > \xmin(n) \ge \xmax(j) = \xmin(n+1)$.
 Furthermore, $n+1$ is east of $j$ and (using \eqref{lowerbound:placement:proof:ymaxj})
 north of all other rectangles, so in particular $n+1$ does not intersect with any rectangle,
 showing that $\plc{}$ is a feasible placement.

 Now, we verify that for all $(l,n+1) \in A(G_{\pi})$ that $l$ is only south of $n+1$,
 and for all $(l,n+1) \in A(G_{-\pi})$ that $l$ is only west of $n+1$.

 Clearly, by construction of $\plc{}$, $j$ is only west of $n+1$ in $\plc{}$,
 and $j$ is the only predecessor of $n+1$ in $G_{-\pi}$.
 As $n+1$ is north of all rectangles other than $j$, it remains to be shown that
 for $(k,n+1) \in A(G_{\pi})$, we have that $k$ is not west of $n+1$ and not east of $n+1$.
 The latter already directly follows from the choice of $\xmax(n+1)$.

 So let $(k,n+1) \in A(G_{\pi})$.
 If $k < j$ we have that $k = i$,
 and by $\xmax(i) > \xmax(j) = \xmin(n+1)$ we have that $i$ is not west of $n+1$.
 Otherwise, i.e., $j < k$, we have $k <_{\pi} n+1 <_{\pi} j$, so $j$ is west of $k$.
 Then $\xmin(n+1) = \xmax(j) \leq \xmin(k) < \xmax(k)$, so $k$ is not west of $n+1$.
 We conclude, using Lemma~\ref{lowerbound:forced_pi_condition:lemma},
 that $\plc{}$ is a forcing placement with $r_{\plc{}} = r_{\pi}$.

Finally, consider the case $n+1 <_{\pi} n$.
Since $\pi$ is biplane, $-\pi$ is biplane as well, and $n <_{-\pi} n + 1$,
so there exists a forcing placement $\plc{}^\prime= \placement[^\prime]$ with $r_{\plc{}^\prime} = r_{-\pi}$.
Now let $\plc{}=(\ymin^\prime, \xmin^\prime, \ymax^\prime, \xmax^\prime)$,
i.e., exchange the role of x-coordinates and y-coordinates in $\plc{}^\prime$.
Since the definition of forcingness is symmetric, clearly $\plc{}$ is still a forcing placement.
Moreover, for $(i,j) \in A(G_{\pi})$, we have $(i,j) \in A(G_{-(-\pi)})$,
so $i$ is only west of $j$ in $\plc{}^\prime$, resulting in $i$ only south of $j$ in $\plc{}$.
Similarly, if $(i,j) \in A(G_{-\pi})$, then $i$ is only south of $j$ in $\plc{}^\prime$, so $i$ is only west of $j$ in $\plc{}$.
Hence, by Lemma~\ref{lowerbound:forced_pi_condition:lemma}, $\plc{}$ is a forcing placement with $r_\plc{} = r_{\pi}$.

\end{proof}

\subsection{Completing the lower bound}

Now, we show that one can apply all permutations on $[n]$ to the canonical representations obtained from Lemma~\ref{lowerbound:placement:lemma},
resulting in a large set of canonical representations.
For permutations $\pi$ and $\rho$ on $[n]$, we denote by $\rho(\pi)$ the permutation on $[n]$ given by
$$\bigl(\rho(\pi)\bigr)(i) = \rho(\pi(i)).$$
{
\providecommand{\biplaneseqpair}{\Pi}

\begin{lemma}
\label{lowerbound:large_canonical_set:lemma}
 Let $n \in \nat$ and
 let
 $$\biplaneseqpair{} = \{ (\pi,\rho) \;:\; \text{$\pi$ and $\rho$ are permutations on $[n]$, $\rho$ is biplane} \}.$$
 Furthermore, let $$C = \{ r_{\pi,\rho(\pi)} \;:\; (\pi,\rho) \in \biplaneseqpair{} \}.$$
 Then, $|C| = |\biplaneseqpair{}|$, and $C$ consists of canonical representations only.
\end{lemma}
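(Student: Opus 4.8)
The plan is to prove the two assertions separately: the cardinality equality $|C| = |\biplaneseqpair{}|$ will follow from injectivity of the representation map, while the claim that $C$ consists only of canonical representations will reduce, via a relabeling, to Lemma~\ref{lowerbound:placement:lemma}.

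First I would establish injectivity. Directly from the definition of $r_{\sigma,\rho}$, a sequence pair is recoverable from its representation, since for $i \neq j$
\[
 i <_{\sigma} j \iff r_{\sigma,\tau}(i,j) \in \{\west, \south\}, \qquad
 i <_{\tau} j \iff r_{\sigma,\tau}(i,j) \in \{\south, \east\}.
\]
Hence $(\sigma,\tau) \mapsto r_{\sigma,\tau}$ is injective on sequence pairs. Moreover $(\pi,\rho) \mapsto (\pi, \rho(\pi))$ is a bijection on pairs of permutations: from $\sigma = \pi$ and $\tau = \rho(\pi)$ one recovers $\rho$ by $\rho(i) = \tau(\sigma^{-1}(i))$. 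Composing the two injections shows $(\pi,\rho) \mapsto r_{\pi,\rho(\pi)}$ is injective, so $|C| = |\biplaneseqpair{}|$.

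For the second claim, fix $(\pi,\rho) \in \biplaneseqpair{}$, so $\rho$ is biplane. The key observation is the identity
\[
 r_{\pi,\rho(\pi)}(i,j) = r_{\rho}(\pi(i), \pi(j)) \qquad \text{for all } i \neq j,
\]
which holds because $i <_\pi j \iff \pi(i) < \pi(j)$ and $i <_{\rho(\pi)} j \iff \rho(\pi(i)) < \rho(\pi(j))$ are exactly the two conditions governing $r_{\text{id},\rho}(\pi(i),\pi(j)) = r_\rho(\pi(i),\pi(j))$. Thus $r_{\pi,\rho(\pi)}$ is obtained from $r_\rho$ by \emph{relabeling} rectangle $i$ as $\pi(i)$. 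Since $\rho$ is biplane, Lemma~\ref{lowerbound:placement:lemma} supplies a forcing placement $Q$ with $r_Q = r_\rho$. I would then define a placement $Q'$ of size $n$ by giving rectangle $i$ in $Q'$ the geometry of rectangle $\pi(i)$ in $Q$, i.e.\ $\xmin'(i) := \xmin(\pi(i))$ and analogously for the other three coordinate functions, so that the spatial relations of $(i,j)$ in $Q'$ coincide with those of $(\pi(i),\pi(j))$ in $Q$.

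It remains to verify that this relabeling preserves forcingness and transforms the canonical representation correctly, which I expect to be the main (though routine) obstacle. Given a forcing sequence $\pi(i) = a_1,\ldots,a_k = \pi(j)$ witnessing the forced relation $\singleRel{}$ of $(\pi(i),\pi(j))$ in $Q$, its preimage $i = \pi^{-1}(a_1),\ldots,\pi^{-1}(a_k) = j$ is a forcing sequence for $(i,j)$ in $Q'$, since each consecutive pair has the same (unique) relation as the corresponding pair in $Q$. Hence $Q'$ is forcing, and its canonical representation satisfies $r_{Q'}(i,j) = r_Q(\pi(i),\pi(j)) = r_\rho(\pi(i),\pi(j)) = r_{\pi,\rho(\pi)}(i,j)$ by the identity above. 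Therefore $r_{\pi,\rho(\pi)} = r_{Q'}$ is a canonical representation, which together with the injectivity established above completes the proof.
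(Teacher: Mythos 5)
Your proposal is correct and follows essentially the same route as the paper: both obtain the forcing placement for $r_\rho$ from Lemma~\ref{lowerbound:placement:lemma} and relabel its rectangles via $\pi$, using the identity $r_{\pi,\rho(\pi)}(i,j) = r_\rho(\pi(i),\pi(j))$ to conclude that $r_{\pi,\rho(\pi)}$ is the canonical representation of the relabeled placement. The only difference is that you spell out the injectivity argument and the preservation of forcingness under relabeling, which the paper dismisses as ``clearly''/``obviously''.
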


\begin{proof}
Clearly, $|C| = |\biplaneseqpair{}|$ holds by definition of $r_{\pi,\rho(\pi)}$.

 We show that $C$ consists of canonical representations only.
 Let $(\pi,\rho) \in \biplaneseqpair{}$.
 Since $\rho$ is biplane, by Lemma~\ref{lowerbound:placement:lemma}, there is a forcing placement $\plc{}$ such that
 $r_{\plc{}} = r_{\rho}$. We now show that permuting the rectangles in $\plc{}$ according to $\pi$ yields a forcing placement
 $\plc{}^\prime$ with $r_{\plc{}^\prime} = r_{\pi,\rho(\pi)}$.
 Let $\plc{} = \placement$ and define $\plc{}^\prime = \placement[^\prime]$ by, for $i \in [n]$,
 \begin{align*}
  \xmin^\prime(i) &:= \xmin(\pi(i)), &
  \ymin^\prime(i) &:= \ymin(\pi(i)), \\
  \xmax^\prime(i) &:= \xmax(\pi(i)), &
  \ymax^\prime(i) &:= \ymax(\pi(i)). \\
 \end{align*}
Obviously, $\plc{}^\prime$ is still a forcing placement.
Furthermore, for $i,j\in [n]$ with $i \neq j$, we have
\begin{align*}
 r_{\plc{}^\prime}(i,j) &= r_{\plc{}}(\pi(i),\pi(j)) \\
                        &= r_{\rho}(\pi(i), \pi(j)) \\
                        &= r_{\text{id}, \rho}(\pi(i), \pi(j)) \\
                        &= r_{\pi, \rho(\pi)}(i,j).
\end{align*}
\end{proof}
} 

\begin{theorem}
 Let $c = 4 + 2 \sqrt2 \geq \smallClb{}$.
 Every complete set of representations for $n$ has
 $\Omega \bigl( n! \cdot \frac{c^n}{n^4} \bigr)$ elements.
\end{theorem}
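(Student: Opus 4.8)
The plan is to combine the reduction from Corollary \ref{lowerbound:canoncial_size:corollary}, which states that every complete set of representations has at least $|C|$ elements for any set $C$ of canonical representations of forcing placements, with the explicit large family produced by Lemma \ref{lowerbound:large_canonical_set:lemma}. In other words, the whole theorem reduces to exhibiting one sufficiently large set of canonical representations and estimating its size.

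Concretely, I would instantiate Lemma \ref{lowerbound:large_canonical_set:lemma} with the set $\Pi = \{(\pi,\rho) : \text{$\pi,\rho$ permutations on $[n]$, $\rho$ biplane}\}$ and the associated set $C = \{r_{\pi,\rho(\pi)} : (\pi,\rho) \in \Pi\}$. That lemma delivers exactly the two facts needed: that $C$ consists entirely of canonical representations of forcing placements of size $n$ (so that Corollary \ref{lowerbound:canoncial_size:corollary} applies to it), and that $|C| = |\Pi|$ (so that no representation is counted twice).

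It then remains only to estimate $|\Pi|$. Since the first component $\pi$ ranges over all $n!$ permutations on $[n]$ independently of the second, while the second component $\rho$ ranges over all biplane permutations on $[n]$, we have $|\Pi| = n! \cdot (\text{number of biplane permutations on }[n])$. By Theorem \ref{thm_numberofbiplaneperm} the number of biplane permutations is $\Theta(c^n/n^4)$, hence in particular $\Omega(c^n/n^4)$, which gives $|C| = |\Pi| = \Omega\bigl(n! \cdot c^n/n^4\bigr)$. Applying Corollary \ref{lowerbound:canoncial_size:corollary} to this $C$ immediately yields that any complete set of representations for $n$ has $\Omega\bigl(n! \cdot c^n/n^4\bigr)$ elements.

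The genuine difficulty has already been resolved in the earlier lemmas — notably the inductive construction of a forcing placement realizing a given biplane permutation (Lemma \ref{lowerbound:placement:lemma}) and the verification that permuting the rectangles keeps a placement forcing while producing distinct canonical representations (Lemma \ref{lowerbound:large_canonical_set:lemma}). I therefore expect no real obstacle in this final assembly; the only point requiring attention is that the first permutation $\pi$ is entirely unconstrained, so that the factor $n!$ multiplies the biplane count in full rather than merging with it, together with the injectivity $|C| = |\Pi|$ that rules out any double counting.
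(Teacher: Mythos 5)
Your proposal is correct and follows essentially the same route as the paper's own proof: invoke Lemma~\ref{lowerbound:large_canonical_set:lemma} to obtain a set $C$ of canonical representations with $|C| = n!$ times the number of biplane permutations, bound that count via Theorem~\ref{thm_numberofbiplaneperm}, and conclude with Corollary~\ref{lowerbound:canoncial_size:corollary}. The paper's proof is just a terser version of exactly this assembly, so there is nothing to add.
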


\begin{proof}
By Lemma~\ref{lowerbound:large_canonical_set:lemma}, there is a set $C$ of canonical representations of size $n$ that contains a separate element for each
pair $\pi, \rho$ of permutations where $\rho$ is biplane.
By Theorem \ref{thm_numberofbiplaneperm}, the number of biplane permutations is $\Theta(\frac{(4 + 2 \sqrt2)^n}{n^4})$.
The result now follows from Corollary~\ref{lowerbound:canoncial_size:corollary}.
\end{proof}

\begin{figure}
\centering
 \begin{tikzpicture}
 \draw (1,1) [fill=gray!20] rectangle (3,2) node[pos=.5] {1};
 \draw (1,2) [fill=gray!20] rectangle (2,4) node[pos=.5] {2};
 \draw (3,1) [fill=gray!20] rectangle (5,3) node[pos=.5] {3};
 \draw (2,3) [fill=gray!20] rectangle (4,4) node[pos=.5] {4};
 \draw (4,3) [fill=gray!20] rectangle (5,4) node[pos=.5] {5};
 \end{tikzpicture}
 \caption{A feasible placement that is not representable by a canonical representation.}
 \label{lowerbound:not_tight:figure}
\end{figure}
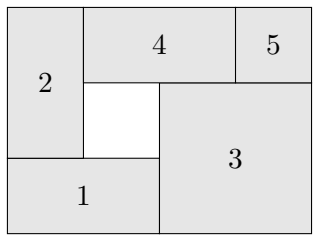

\bigskip Finally, we observe that the construction is not tight:

\begin{lemma}
\label{lowerbound:not_tight:lemma}
 Let $n \in \nat$ with $n \geq 5$. Then the cardinality of any complete set of representations for $n$ is strictly larger than $n!$ times the number of biplane permutations on $[n]$.
\end{lemma}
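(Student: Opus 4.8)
The plan is to exhibit, for every $n \ge 5$, a single feasible placement $Q$ of size $n$ that no canonical representation represents, and then leverage the large set $C$ from Lemma~\ref{lowerbound:large_canonical_set:lemma}. Recall that every element of $C$ is a canonical representation and that $|C|$ equals $n!$ times the number of biplane permutations. By the (unlabelled) lemma just before Corollary~\ref{lowerbound:canoncial_size:corollary}, any complete set $R$ can be enlarged, without increasing its cardinality, to a complete set $R^\prime$ with $C \subseteq R^\prime$. Since $R^\prime$ is complete it must represent $Q$ by some element; but no canonical representation, and in particular no element of $C$, represents $Q$. Hence that element lies in $R^\prime \setminus C$, so $|R| \ge |R^\prime| \ge |C| + 1$, which is exactly the asserted strict inequality.

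For $n = 5$ I would take the placement $Q$ of Figure~\ref{lowerbound:not_tight:figure}. A direct inspection of its ten rectangle pairs shows that every pair except $(1,5)$ satisfies exactly one spatial relation, while $1$ is simultaneously south and west of $5$. Suppose some canonical representation $r_{\plc{}^\prime}$ of a forcing placement $\plc{}^\prime$ represented $Q$. On the nine pairs carrying a unique relation, $r_{\plc{}^\prime}$ is forced to take that relation, so it is fully determined there, and $r_{\plc{}^\prime}(1,5) \in \{\south,\west\}$. This leaves two cases to refute.

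The key, and hardest, step is to turn forcedness into rigid geometry for $\plc{}^\prime$. Here $r_{\plc{}^\prime}(i,j)$ is the unique relation forced for $(i,j)$, i.e.\ obtained as the transitive closure (Observation~\ref{forced::transitivity::observation}) of the pairs carrying a single relation in $\plc{}^\prime$. In the case $r_{\plc{}^\prime}(1,5)=\south$ I would check that, among the five rectangles, the only $x$ with $r_{\plc{}^\prime}(x,5)=\south$ are $1$ and $3$, and that no rectangle is forced south of $3$; consequently every single-relation south-chain ending at $5$ must be the direct pair $(1,5)$, forcing $(1,5)$ to carry \emph{only} the relation south in $\plc{}^\prime$, and the same bookkeeping makes $(3,4)$ a single-south pair. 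Single-south pairs have overlapping $x$-projections, giving $\xmin^\prime(5) < \xmax^\prime(1)$ and $\xmin^\prime(3) < \xmax^\prime(4)$, while the forced relations ``$1$ west $3$'' and ``$4$ west $5$'' give $\xmax^\prime(1) \le \xmin^\prime(3)$ and $\xmax^\prime(4) \le \xmin^\prime(5)$. These close a cycle $\xmax^\prime(1) \le \xmin^\prime(3) < \xmax^\prime(4) \le \xmin^\prime(5) < \xmax^\prime(1)$, a contradiction. The case $r_{\plc{}^\prime}(1,5)=\west$ is symmetric: $(1,5)$ and $(2,3)$ are forced to be single-west pairs, and their overlapping $y$-projections together with ``$1$ south $2$'' and ``$3$ south $5$'' close an analogous cycle in the $y$-coordinates. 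I expect the precise ``who can precede whom in a single-relation chain'' analysis to be the main obstacle, as it must be run carefully for both the target pair $(1,5)$ and the auxiliary pair in each of the two cases; Lemma~\ref{lowerbound:forced_relation:at_most_one:lemma} is what makes $r_{\plc{}^\prime}$ well defined throughout.

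Finally, to treat arbitrary $n \ge 5$ I would extend $Q$ by stacking rectangles $6,\ldots,n$ far to the north, each spanning the full horizontal extent of the block $\{1,\ldots,5\}$, so that $6,\ldots,n$ are related to one another and to the block only by north/south. This keeps $Q$ feasible, and, crucially, introduces no new single-relation predecessors in the relevant directions: each added rectangle is forced north of $1,\ldots,5$, so it can never appear in the south- or west-chains examined above. Therefore the predecessor bookkeeping, and with it both coordinate contradictions, carries over verbatim to $\plc{}^\prime$, so $Q$ remains unrepresentable by any canonical representation. Combined with the counting argument of the first paragraph, this yields the strict inequality for every $n \ge 5$.
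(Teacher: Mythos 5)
Your proposal is correct and follows essentially the same route as the paper: the same implicit counting step (augmenting any complete set to contain the set $C$ from Lemma~\ref{lowerbound:large_canonical_set:lemma} and then exhibiting one placement no canonical representation can represent), the same placement from Figure~\ref{lowerbound:not_tight:figure}, the same ``forcing chains must be direct'' bookkeeping, and the same coordinate-cycle contradiction $\xmax^\prime(1) \le \xmin^\prime(3) < \xmax^\prime(4) \le \xmin^\prime(5) < \xmax^\prime(1)$ (and its $y$-analogue). The only cosmetic differences are that you split into two cases on $r_{\plc{}^\prime}(1,5)$ where the paper derives both ``west'' and ``south'' at once, and that you pad to $n > 5$ with rectangles to the north rather than the east; both variants work for the same reason.
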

\begin{proof}
 First, we prove the case $n=5$.
 Consider the feasible placement $\plc{}$ as depicted in Figure~\ref{lowerbound:not_tight:figure}.
 We show that $\plc{}$ is not representable by a canonical representation.

 So suppose that $\plc{}^\prime=\placement[^\prime]$ is a forcing placement such that $\plc{}$ is represented by $r_{\plc{}^\prime}$.
The pair $(1,5)$ is the only pair without a forced relation in $\plc{}$.
Moreover, there is no $1 < i < 5$ such that $(1,i)$ and $(i,5)$ have the same relation in $\plc{}$.
Hence, the only way to force a relation for $(1,5)$ in $\plc{}^\prime$ is to either let 1 be only west of 5 or let 1 be only south of 5 in $\plc{}^\prime$.

For all pairs $1 \leq i < j \leq 4$, there is no $k$ such that $(i,k)$ and $(k,j)$ have the same relation in $\plc{}$.
Hence, all such $(i,j)$  may only have one relation in $\plc{}^\prime$ as well.
Since 3 is south of 4, but not east of 4, we have $\xmin^\prime(3) < \xmax^\prime(4)$.
This implies
$$\xmax^\prime(1) \leq \xmin^\prime(3) < \xmax^\prime(4) \leq \xmin^\prime(5),$$
so 1 is west of 5 in $\plc{}^\prime$.
Similarly, 2 is west of 3, but not north of 3, so we have $\ymin^\prime(2) < \ymax^\prime(3)$.
Then
$$\ymax^\prime(1) \leq \ymin^\prime(2) < \ymax^\prime(3) \leq \ymin^\prime(5),$$
so 1 is south of 5 in $\plc{}^\prime$.
This contradicts that $1$ and $5$ have only one relation in $\plc{}^\prime$.

For the case $n > 5$, the same argument works after adding $n - 5$ rectangles to $\plc{}$ that are east of $\{1, \ldots, 5\}$.
\end{proof}

\section{Acknowledgements}
The authors are thankful to the On-Line Encyclopedia of Integer Sequences \cite{oeisplane}, which drew their attention to plane permutations.

\bibliographystyle{plain}
\bibliography{rectpack}   

\begin{thebibliography}{10}

\bibitem{ackerman2006bijection}
Eyal Ackerman, Gill Barequet, and Ron~Y. Pinter.
\newblock A bijection between permutations and floorplans, and its
  applications.
\newblock {\em Discrete Applied Mathematics}, 154(12):1674--1684, 2006.

\bibitem{DBLP:journals/combinatorics/AsinowskiBBMP13}
Andrei Asinowski, Gill Barequet, Mireille Bousquet{-}M{\'{e}}lou, Toufik
  Mansour, and Ron~Y. Pinter.
\newblock Orders induced by segments in floorplans and {(2} - 14 - 3, 3 - 41 -
  2)-avoiding permutations.
\newblock {\em Electronic Journal of Combinatorics}, 20(2), 2013.

\bibitem{bousquet2007forest}
Mireille Bousquet-M{\'e}lou and Steve Butler.
\newblock Forest-like permutations.
\newblock {\em Annals of Combinatorics}, 11(3-4):335--354, 2007.

\bibitem{2017arXiv170204529B}
Mathilde {Bouvel}, Veronica {Guerrini}, Andrew {Rechnitzer}, and Simone
  {Rinaldi}.
\newblock {Semi-Baxter and strong-Baxter: two relatives of the Baxter
  sequence}.
\newblock {\em arXiv:1702.04529v2}, 2017.

\bibitem{ChenChangFloorplanChapter}
Tung-Chieh Chen and Yao-Wen Chang.
\newblock Packing floorplan representations.
\newblock In Charles~J. Alpert, Dinesh~P. Mehta, and Sachin~S. Sapatnekar,
  editors, {\em {Handbook of Algorithms for Physical Design Automation}},
  chapter~11. CRC press, 2008.

\bibitem{funke2016exact}
Julia Funke, Stefan Hougardy, and Jan Schneider.
\newblock An exact algorithm for wirelength optimal placements in {VLSI}
  design.
\newblock {\em Integration, the VLSI Journal}, 52:355--366, 2016.

\bibitem{jerrum1985complementary}
Mark Jerrum.
\newblock Complementary partial orders and rectangle packing.
\newblock Technical report, University of Edinburgh, Department of Computer
  Science, 1985.

\bibitem{murata1996vlsi}
Hiroshi Murata, Kunihiro Fujiyoshi, Shigetoshi Nakatake, and Yoji Kajitani.
\newblock {VLSI} module placement based on rectangle-packing by the
  sequence-pair.
\newblock {\em IEEE Transactions on Computer-Aided Design of Integrated
  Circuits and Systems}, 15(12):1518--1524, 1996.

\bibitem{murata1997mapping}
Hiroshi Murata, Kunihiro Fujiyoshi, Tomomi Watanabe, and Yoji Kajitani.
\newblock A mapping from sequence-pair to rectangular dissection.
\newblock In {\em Proceedings of the Asia and South Pacific Design Automation
  Conference (ASP-DAC)}, pages 625--633. IEEE, 1997.

\bibitem{onodera1991branch}
Hidetoshi Onodera, Yo~Taniguchi, and Keikichi Tamaru.
\newblock Branch-and-bound placement for building block layout.
\newblock In {\em Proceedings of the 28th ACM/IEEE Design Automation
  Conference}, pages 433--439. ACM, 1991.

\bibitem{shen2003bounds}
Zion~Cien Shen and Chris~C.N. Chu.
\newblock Bounds on the number of slicing, mosaic, and general floorplans.
\newblock {\em IEEE Transactions on Computer-Aided Design of Integrated
  Circuits and Systems}, 22(10):1354--1361, 2003.

\bibitem{oeisplane}
Neil J.~A. Sloane.
\newblock {The On-Line Encyclopedia of Integer Sequences}, sequence {A117106},
  2017.
\newblock Available at \url{https://oeis.org/a117106}.

\bibitem{yao2003floorplan}
Bo~Yao, Hongyu Chen, Chung-Kuan Cheng, and Ronald Graham.
\newblock Floorplan representations: Complexity and connections.
\newblock {\em ACM Transactions on Design Automation of Electronic Systems
  (TODAES)}, 8(1):55--80, 2003.

\bibitem{young2003twin}
Evangeline~F.Y. Young, Chris~C.N. Chu, and Zion~Cien Shen.
\newblock Twin binary sequences: a nonredundant representation for general
  nonslicing floorplan.
\newblock {\em IEEE Transactions on Computer-Aided Design of Integrated
  Circuits and Systems}, 22(4):457--469, 2003.

\bibitem{zhang2004theory}
Xuliang Zhang and Yoji Kajitani.
\newblock Theory of {T}-junction floorplans in terms of single-sequence.
\newblock In {\em Proceedings of the 2004 International Symposium on Circuits
  and Systems}, volume~5, pages 341--344. IEEE, 2004.

\end{thebibliography}

\end{document}